\def\R{\mathbb R}
\def\C{\mathbb C}
\def\N{\mathbb N}
\def\EL{\mathcal B}
\def\ann{\operatorname{ann}}
\def\im{\operatorname{Im}}
\def\re{\operatorname{Re}}
\def\arg{\operatorname{arg}}
\def\meas{\operatorname{meas}}
\def\dens{\operatorname{dens}}
\newtheorem{theorem}{Theorem}[section]
\newtheorem{lemma}[theorem]{Lemma}
\theoremstyle{remark}
\newtheorem*{remark}{Remark}
\newtheorem*{acknowledgement}{Acknowledgement}
\numberwithin{equation}{section}
\title[Lebesgue measure of escaping sets]{Lebesgue measure of escaping sets of entire functions of completely
regular growth}
\author{Walter Bergweiler}
\address{Mathematisches Seminar, Christian-Albrechts-Universit\"at zu Kiel,
24098 Kiel, Germany}
\email{bergweiler@math.uni-kiel.de}
\author{Igor Chyzhykov}
\address{Faculty of Mechanics and Mathematics, Ivan Franko National University of Lviv,
Universytets'ka 1, 79000, Lviv, Ukraine}
\email{chyzhykov@yahoo.com}
\subjclass[2010]{37F10 (primary), 30D05, 30D15 (secondary)}
\date{}
\begin{document}
\begin{abstract}
We give conditions ensuring that the Julia set and the escaping set of an entire function
of completely regular growth have positive Lebesgue measure. The essential
hypotheses are that the indicator is positive except perhaps at isolated points
and that most zeros are located in neighborhoods of finitely many rays.
We apply the result to solutions of linear differential equations.
\end{abstract}
\maketitle
\section{Introduction and results}
The \emph{Fatou set} of an entire function $f$ is defined as the set of all points in $\C$
where the iterates $f^n$ of $f$ form a normal family and
the \emph{Julia set} $J(f)$ is its complement.
The \emph{escaping set} $I(f)$ consists of all points which tend to $\infty$ under iteration.
By a result of Eremenko~\cite{Eremenko1989} we have $J(f)=\partial I(f)$.
For an introduction to the iteration theory of transcendental entire functions we 
refer to~\cite{Bergweiler1993} and~\cite{Schleicher2010}.

There has been considerable interest in the Hausdorff dimension and the Le\-besgue measure
of Julia sets and escaping sets.
For transcendental entire functions, the first results in this direction were obtained by
McMullen~\cite{McMullen1987} who proved that $J(\lambda e^z)$ has Hausdorff dimension $2$ 
and that $J(\sin(\alpha z+\beta))$ has positive Le\-besgue measure.
Here $\alpha,\beta,\lambda\in\C$, with $\alpha,\lambda\neq 0$.
These results have initiated a large body of research. Many of the subsequent results 
are concerned with the \emph{Eremenko-Lyubich class} $\EL$ of those entire functions for 
which the set of critical and (finite) asymptotic values is bounded.
Eremenko and Lyubich~\cite[Theorem~1]{Eremenko1992} proved that if $f\in\EL$, then 
$I(f)\subset J(f)$ and thus $J(f)=\overline{I(f)}$.
Note that the functions $\lambda e^z$ and $\sin(\alpha z+\beta)$ considered by McMullen
are in~$\EL$. McMullen actually proved that his results hold with $J(\cdot)$ replaced
by $I(\cdot)$.

Among the many results concerned with the Hausdorff dimension of Julia sets and escaping sets
we mention those of Bara\'nski~\cite{Baranski2008} and Schubert~\cite{Schubert2007}. They
show that for functions in $\EL$ of finite order the Julia set has Hausdorff dimension~$2$.
As the exponential function has order~$1$, this generalizes McMullen's result concerning
the functions~$\lambda e^z$. For further results concerning the Hausdorff dimension of Julia sets
we refer to the survey~\cite{Stallard2008}, as well as~\cite{Baranski2009,Bergweiler2010,Bergweiler2009,Bergweiler2008,Rempe2009,Rempe2010,Sixsmith2015}
for some more recent results

There are fewer results concerned with extending McMullen's result on the Le\-besgue
measure of $J(\sin(\alpha z+\beta))$ to larger classes of functions.
We mention~\cite{Aspenberg2012} where it is shown that
if $f\in \EL$ satisfies a certain growth condition, then $I(f)$ and hence $J(f)$ 
have positive measure.
Sixsmith~\cite{Sixsmith} showed that if $f$ has the form
\begin{equation} \label{intro1}
f(z)=\sum_{k=0}^{n-1}a_k\exp\!\left(\omega_n^k z\right), 
\end{equation}
where $n\geq 2$, $\omega_n$ is an $n$-th root of unity and $a_0\dots,a_{n-1}\in\C\backslash\{0\}$,
then $J(f)$ and $I(f)$ have positive measure. 
The functions  $\sin(\alpha z+\beta)$ are conjugated to functions of the form~\eqref{intro1}
with $n=2$,
concerning the latter functions.
Note that the functions given by~\eqref{intro1} are not in $\EL$ if $n\geq 3$.
Sixsmith noted that his method works more generally for exponential sums of the form
\begin{equation} \label{intro2}
f(z)=\sum_{k=0}^{n-1}a_k\exp\!\left(b_k z\right), 
\end{equation}
provided $\arg b_k<\arg b_{k+1} < \arg b_k +\pi$ for $0\leq k\leq n-2$ and
$\arg b_0<\arg b_{n-1}-\pi$, with the argument chosen in $[0,2\pi)$.

We shall consider more general classes of functions. 
In order to motivate and formulate the results, recall that if $\rho(r)$ is 
positive and differentiable for large $r$ and 
satisfies $\lim_{r\to\infty}\rho(r)=\rho\in (0,\infty)$ and $\lim_{r\to\infty}\rho'(r)r\log r=0$,
then $\rho(r)$ is called a \emph{proximate order}.
The \emph{indicator} $h$ of an entire function $f$ of order $\rho$ with respect to the
proximate order $\rho(r)$ is defined by
\[
h(\theta)=\limsup_{r\to\infty}\frac{\log|f(re^{i\theta})|}{r^{\rho(r)}}.
\]
The function $f$ is said to be of \emph{completely regular growth} (in the sense of Levin and 
Pfluger) if there exists disks $D(a_k,s_k)$ satisfying
\begin{equation}\label{C0}
\sum_{|a_k|\leq r} s_k=o(r) 
\end{equation}
such that
\begin{equation}\label{def-crg}
\log|f(re^{i\theta})| = h(\theta) r^{\rho(r)} +o(r^{\rho(r)})
\quad\text{for}\ 
re^{i\theta}\notin \bigcup_k D(a_k,s_k)
\end{equation}
as $r\to \infty$,
uniformly in~$\theta$.
A union of disks satisfying~\eqref{C0} is called a $C_0$-set.
We refer to Levin's book~\cite{Levin1964} for a thorough discussion of functions
of completely regular growth.

It follows from a result of Eremenko and Lyubich~\cite[Theorem~7]{Eremenko1992} that if a function
$f\in\EL$ is bounded in a sector, then $I(f)$ has measure zero.
Moreover, if -- in addition -- the singularities of the inverse are contained in the basins of attracting (or
parabolic) periodic points, then $J(f)$ also has measure zero~\cite[Theorem~8]{Eremenko1992}.

Thus in order to find conditions which imply that the escaping set and the Julia set 
of an entire function have positive measure, it seems reasonable to assume that 
the indicator is positive, except perhaps at isolated points.

We note that the exponential sum $f$ given by~\eqref{intro2} is of completely regular growth
with respect to the constant proximate order~$\rho(r)\equiv 1$, with indicator
\begin{equation}\label{indicator2}
h(\theta)=\max_{0\leq k\leq n-1} |b_k|\cos(\theta+\arg b_k).
\end{equation}
The condition on the arguments of the $b_k$ stated after~\eqref{intro2} ensures
that $h$ is positive. It is well-known (see, e.g., \cite{Polya1920,Schwengeler1925})
that the zeros of exponential sums are close to certain rays.

Our first result deals with entire functions for which all zeros are in the
neighborhood of certain rays and for which~\eqref{def-crg} holds with an error 
slightly better term than just $o(r^{\rho(r)})$.
\begin{theorem} \label{thm1.2}
Let 
$\theta_1<\theta_2<\dots <\theta_m<\theta_{m+1}=\theta_1+2\pi$ and
let $h\colon\R\to\R$ be a continuous function satisfying
$h(\theta)>0$ for $\theta_j<\theta<\theta_{j+1}$ and $j=1,\dots,m$.
Let $f$ be an entire function, $\rho(r)$  a proximate order and $\varepsilon(r)=1/\log^N(r)$ 
for some $N\in\N$, where $\log^N$ denotes the $N$-th iterate of the logarithm.
Suppose that
\begin{equation}\label{0a}
\log|f(re^{i\theta})|=h(\theta)r^\rho +O\!\left(r^{\rho(r)} \varepsilon(r)\right)
\end{equation}
whenever
\begin{equation}\label{0b}
|\theta-\theta_j|>\varepsilon(r) \quad \text{for } j=1,\dots,m.
\end{equation}
Then $I(f)\cap J(f)$ has positive measure.
\end{theorem}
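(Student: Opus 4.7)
The plan is to run a McMullen-type density argument, in the spirit of McMullen's treatment of $\sin$ and Sixsmith's of exponential sums, with the stronger error term $O(r^{\rho(r)}\varepsilon(r))$ in~\eqref{0a} compensating for the lack of an explicit pointwise formula. Fix a small $\eta>0$, set
\[
c_0=\min_{1\le j\le m}\min_{\theta_j+\eta\le\theta\le\theta_{j+1}-\eta}h(\theta)>0,
\]
and define the \emph{good set} at radius $r$ by $G(r)=\{re^{i\theta}:|\theta-\theta_j|>\varepsilon(r)\text{ for every }j\}$. From~\eqref{0a}--\eqref{0b} and continuity of $h$, for $r$ sufficiently large every $z=re^{i\theta}\in G(r)$ satisfies $\log|f(z)|\ge\tfrac12 h(\theta)r^{\rho(r)}$, which is at least $\tfrac12 c_0 r^{\rho(r)}$ on the central portion of each good sector. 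A Cauchy-estimate argument on disks of radius a small multiple of $r\varepsilon(r)$---still inside the regime where~\eqref{0a} holds after shrinking $\eta$---then yields the derivative bound $|f'(z)|\asymp r^{\rho(r)-1}|f(z)|$ on the bulk of $G(r)$.

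Next I would fix a large $R_0$, choose a small disk $D_0\subset G(R_0)$ in a good sector, and recursively set $D_n=\{z\in D_{n-1}:f^n(z)\in G(|f^n(z)|)\}$. Covering $D_{n-1}$ by small disks on which $f$ is approximately affine via the derivative bound, each image meets the bad angular strip at scale $|f^n(z)|$ in a set of relative area $O(\varepsilon(|f^n(z)|))$, and summing yields
\[
\frac{|D_n|}{|D_{n-1}|}\ge 1-C\varepsilon(r_n^{-}),\qquad r_n^{-}:=\min_{z\in D_{n-1}}|f^n(z)|.
\]
Because $\log r_n^{-}\ge \tfrac12 c_0 (r_{n-1}^{-})^{\rho(r_{n-1}^{-})}$, an induction on the number of iterated logarithms shows that $\log^N r_n^{-}\ge c\log^{N-n}R_0$ for $0\le n\le N-1$ while $\log^N r_n^{-}$ grows at least exponentially in $n$ once $n\ge N$. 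Hence $\sum_{n\ge 0}\varepsilon(r_n^{-})\le C'/\log R_0$, which can be made arbitrarily small by choosing $R_0$ large; consequently $\prod_n(1-C\varepsilon(r_n^{-}))>0$, so $E:=\bigcap_n D_n$ has positive Lebesgue measure. Since $|f^n(z)|\to\infty$ for every $z\in E$, we obtain $E\subset I(f)$.

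To upgrade this to $|E\cap J(f)|>0$ I would argue that no $z\in E$ lies in a Fatou component on which the iterates tend to infinity: on any disk around $z$ the derivative estimate forces $(f^n)'$ to blow up, while $f^n$ itself takes values of arbitrarily large modulus, so Picard's theorem applied to $f^n$ produces non-escaping points in every neighborhood of $z$, putting $z\in\partial I(f)=J(f)$. The main technical hurdle is the density inequality in the second paragraph: one must balance the pull-back scale so that image disks are large enough to register the bad strip at its correct relative density, yet small enough for~\eqref{0a} to apply uniformly on the preimage. This balance is exactly where the hypothesis $\varepsilon(r)=1/\log^N r$ becomes decisive, since only an iterated-logarithm decay makes the error budget summable under the very rapid iteration of $|f^n|$.
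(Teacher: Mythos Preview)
Your outline has the right shape---a McMullen-type density iteration with summable losses---but the step ``approximately affine via the derivative bound'' hides the main difficulty, and the route you propose to that bound fails. A two-sided estimate $|f'(z)|\asymp r^{\rho(r)-1}|f(z)|$ does not by itself give bounded distortion on small disks; for the pull-back density inequality you need $f$ (or rather a branch of $\log f$) to be \emph{univalent} there, so that Koebe applies. Moreover, a Cauchy-type estimate for $(\log f)'$ must go through the Schwarz formula (only $\re\log f=\log|f|$ is controlled by~\eqref{0a}), and at the scale $r\varepsilon(r)$ you suggest the main term $h(\theta)r^{\rho(r)}$ oscillates around the circle by $O(r^{\rho(r)}\varepsilon(r))$, the \emph{same} order as the error term in~\eqref{0a}; signal and noise are indistinguishable, so no lower bound on $\re(zf'/f)$ emerges. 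The paper works instead at the larger scale $r\sqrt{\varepsilon(r)}$: the error then contributes only $O(r^{\rho(r)-1}\sqrt{\varepsilon(r)})$ to $f'/f$, and a careful Schwarz-formula computation yields $\re(zf'(z)/f(z))=\rho h(\theta)r^{\rho(r)}+O(r^{\rho(r)}\sqrt{\varepsilon(r)})>0$ on nested sectors with margins $\varepsilon(r)\ll\sqrt{\varepsilon(r)}\ll\varepsilon(r)^{1/4}$. Positivity of this real part, via the Noshiro--Warschawski criterion after the substitution $\zeta\mapsto e^\zeta$, gives univalence of $\log f$ on the relevant disks, and the whole iteration is packaged as an abstract criterion (Theorem~\ref{thm1.4}) phrased directly in terms of $\re(zf'/f)$.

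Your Julia-set paragraph is also a genuine gap. Blow-up of $|(f^n)'|$ does not contradict normality---Baker domains exist, and normality concerns the spherical derivative---so the Picard sketch does not place $E$ inside $J(f)$. The paper handles this separately: Zheng's growth criterion $\log M(2r,f)\ge d\log M(r,f)$, satisfied by any function of completely regular growth, excludes multiply-connected wandering domains, and once those are ruled out Theorem~\ref{thm1.4} delivers positive measure for $I(f)\cap J(f)$ directly.
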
 
We will see in the proof that the hypotheses imply that 
the zeros of $f$ are contained in certain small neighborhoods of the $m$ rays
$\{z\colon \arg z=\theta_j\}$.
However, we may also allow zeros outside these neighborhoods
as long as their counting function is small.

We denote by $M(r,f)=\max_{|z|=r}|f(z)|$  the maximum modulus of an entire function~$f$.
Later, we will also use other standard terminology from the theory of entire
and meromorphic function~\cite{Goldberg2008}.
In particular, $T(r,f)$ denotes the Nevanlinna characteristic and
for $a\in\C$ and $r>0$ we denote by $n(r,a)=n(r,a,f)$ the number of $a$-points of
$f$ in the closed disk of radius $r$ around~$0$.
\begin{theorem} \label{thm1.3}
Let $f$ be an entire function satisfying the hypothesis of Theorem~\ref{thm1.2}
and let $g$ be an entire function satisfying
\[
\log M(r,g)=O\!\left( r^{\rho(r)}/\log^L r \right)
\]
for some $L\in\N$. Let $F=fg$.
Then $I(F)\cap J(F)$ has positive measure.
\end{theorem}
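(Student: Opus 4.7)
The plan is to show that $F=fg$ satisfies essentially the same hypothesis as $f$ in Theorem~\ref{thm1.2}, but with the estimate~\eqref{0a} holding only outside an auxiliary $C_0$-set coming from the zeros of $g$, and then to verify that this extra exception does not affect the measure-theoretic construction of escaping points in the proof of Theorem~\ref{thm1.2}.

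The first step is a minimum modulus estimate for $g$. The hypothesis $\log M(r,g)=O(r^{\rho(r)}/\log^L r)$ together with Jensen's formula yields
\[
n(r,0,g)=O\!\left(r^{\rho(r)}/\log^L r\right).
\]
A standard Cartan-type minimum principle for subharmonic functions (see e.g.~\cite{Levin1964}) then gives, for any $H\in(0,r)$, disks $D(c_k,t_k)$ with $\sum t_k\le H$ covering the zeros of $g$ in $|z|\le 2r$ and satisfying
\[
\log|g(z)|\ge -C\log M(2r,g)-C\,n(2r,0,g)\log(r/H)
\]
for $|z|\le r$ outside these disks. Choosing $H=r/\log^M r$ with $M\in\N$ to be fixed later and letting $r$ range over a dyadic sequence, the resulting disks form a $C_0$-set $E$, and on its complement
\[
\log|g(z)|\ge -C\,\frac{|z|^{\rho(|z|)}}{\log^{L-1}|z|}.
\]

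Combining this with~\eqref{0a}, for every $z=re^{i\theta}\notin E$ satisfying~\eqref{0b},
\[
\log|F(z)|=\log|f(z)|+\log|g(z)|=h(\theta)r^{\rho(r)}+O\!\left(\frac{r^{\rho(r)}}{\log^{N'}r}\right),
\]
with $N'=\min(N,L-1)$. Thus $F$ satisfies an analogue of~\eqref{0a} with the same rays $\theta_1,\dots,\theta_m$ and the same indicator $h$ but with $\varepsilon(r)$ replaced by $1/\log^{N'}r$, provided that the $C_0$-set $E$ is also excluded.

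The final step is to argue that the proof of Theorem~\ref{thm1.2} accommodates this additional $C_0$-exception. Two observations make this feasible. The angular projection of $E$ onto the circle $|z|=r$ has measure at most $2\sum t_k/r=O(1/\log^M r)$, which is arbitrarily small relative to $\varepsilon(r)$ upon taking $M\ge N'+1$; moreover, $E$ has vanishing planar density, since its total area in $|z|\le r$ is bounded by $\pi(\max_k t_k)\sum t_k=O(r^2/\log^{2M}r)$. The proof of Theorem~\ref{thm1.2} produces escaping points inside subregions of the good sectors $\{\theta_j+\varepsilon(r)<\theta<\theta_{j+1}-\varepsilon(r)\}$ on which $|f|$ is exponentially large, via an iterated-preimage construction; removing from these subregions the density-zero set $E$, whose angular cross-section is negligible, leaves a set of the same positive planar measure on which $|F|$ is still exponentially large, so that the trap construction gives $I(F)\cap J(F)$ of positive measure. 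The main obstacle here is that the statement of Theorem~\ref{thm1.2} does not explicitly permit an auxiliary $C_0$-set exception; one therefore has to revisit its proof and extract a slightly more general statement ensuring that the Cantor-type construction of escaping points is robust under removal of a $C_0$-set whose angular cross-sections need not lie near the rays $\arg z=\theta_j$.
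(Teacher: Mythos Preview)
Your proposal has a genuine gap: it addresses only $\log|F|$ and not $\re(zF'(z)/F(z))$. The proof of Theorem~\ref{thm1.2} does not work solely from the asymptotic~\eqref{0a}; it uses the Schwarz integral formula~\eqref{schwarz1} to convert~\eqref{0a} into the estimate~\eqref{8l} for $\re(zf'(z)/f(z))$, and this is what feeds into the sets $A(f,\beta)$ and $B(f,\beta)$ of Theorem~\ref{thm1.4}. The Schwarz formula is applied on circles $|\zeta-z|=t_r=r\varepsilon_2(r)$ lying entirely in the good region $U_j$. Once you allow a $C_0$-set exception for $\log|F|$, those circles may pass through (or near) zeros of $g$, where $\log|F(\zeta)|$ is unbounded below, and the integral estimate~\eqref{8d} collapses. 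So ``revisiting the proof'' in the way you describe does not produce any control on $\re(zF'(z)/F(z))$, and Theorem~\ref{thm1.4} cannot be invoked.

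The paper avoids this by keeping the estimate~\eqref{4h} for $\re(zf'(z)/f(z))$, which holds with no exceptional set on $W_j$, and then treating $g$ separately: it writes $zF'/F=zf'/f+zg'/g$ and bounds $|zg'(z)/g(z)|$ directly via the logarithmic-derivative inequality~\eqref{5e} together with the Fuchs--Macintyre lemma (Lemma~\ref{fuchs}), which controls $\sum 1/|z-z_k|$ outside disks whose radii satisfy an $\ell^2$ bound. This yields $|zg'/g|=o(r^{\rho(r)}\varepsilon_2(r))$ off a small union of disks, so~\eqref{4h} passes to $F$ on the complement. Your Cartan-type step for the minimum modulus of $g$ is correct and is also used in the paper (Lemma~\ref{levin-thm11}), but it is only half of what is needed; the missing half is an independent estimate of the logarithmic derivative of $g$, which cannot be recovered from the Schwarz formula once an exceptional set is present.
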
 
As an application of these results we consider solutions of certain differential
equations.
Steinmetz (\cite{Steinmetz1989}, see also~\cite[\S 2]{Steinmetz1993}) showed 
that transcendental entire solutions of linear differential equations
\begin{equation}\label{lde}
w^{(n)}+p_{n-1}w^{(n-1)}+\dots + p_1w'+p_0w=0
\end{equation}
with rational coefficients $p_0,p_1,\dots,p_{n-1}$ are of completely regular growth
with respect to a constant proximate order $\rho(r)\equiv \rho>0$ and that
``most'' of the zeros are ``close'' to finitely many rays;
see Lemma~\ref{thm-steinmetz} below for a precise statement of Steinmetz's theorem.
We will use his results to prove the following.
\begin{theorem} \label{thm1.1}
Let $w$ be an entire function which solves a linear differential equation~\eqref{lde}
with rational coefficients~$p_0,p_1,\dots,p_{n-1}$. Suppose that the indicator of $w$ is positive
except possibly at isolated points. 
Then $I(w)\cap J(w)$ has positive measure.
\end{theorem}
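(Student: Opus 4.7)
The plan is to deduce Theorem~\ref{thm1.1} from Theorem~\ref{thm1.3} by means of Steinmetz's theorem (Lemma~\ref{thm-steinmetz}). First, I would invoke that lemma to conclude that $w$ is of completely regular growth with respect to a constant proximate order $\rho(r)\equiv\rho>0$, and that all but a controlled ``exceptional'' set of zeros of $w$ lie in arbitrarily thin sectors around finitely many rays $\arg z=\alpha_j$, with an explicit bound on the counting function of the exceptional zeros. Because the indicator $h$ is $\rho$-trigonometrically convex and, by hypothesis, positive except at isolated points, it can vanish at only finitely many points $\theta_1<\dots<\theta_m$ in $[0,2\pi)$; moreover, the standard CRG relation between the indicator and the angular distribution of zeros forces the rays $\arg z=\alpha_j$ to be among the rays $\arg z=\theta_j$.

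The second step is to factor $w=fg$, where $f$ is a canonical product over those zeros of $w$ which lie in small sectors about the rays $\arg z=\theta_j$, and $g$ is a canonical product over the remaining, exceptional, zeros. Steinmetz's bound on the counting function of the exceptional zeros, combined with classical estimates for canonical products (as in Levin~\cite{Levin1964}), will give
\[
\log M(r,g) = O\!\left(r^{\rho(r)}/\log^L r\right)
\]
for a suitable integer $L\in\N$, which is exactly the hypothesis placed on $g$ in Theorem~\ref{thm1.3}.

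The technically delicate -- and, in my view, principal -- obstacle is to verify that $f$ itself satisfies the \emph{sharpened} asymptotic~\eqref{0a}--\eqref{0b} with error $O(r^{\rho(r)}\varepsilon(r))$, where $\varepsilon(r)=1/\log^N r$, rather than only the generic $o(r^{\rho(r)})$ furnished by the definition of completely regular growth. To obtain this I would estimate $\log|f(re^{i\theta})|$ directly from the integral representation of its canonical product, exploiting the fact that the zeros of $f$ are concentrated in narrow sectors around the finitely many rays $\arg z=\theta_j$. The hypothesis $|\theta-\theta_j|>\varepsilon(r)$ provides a quantitative separation between the ray $e^{i\theta}$ and these sectors, which allows one to estimate the relevant kernel and squeeze out the extra factor $\varepsilon(r)$; the CRG distribution of zeros within each sector takes care of the main term $h(\theta)r^\rho$.

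Once these two estimates are in place, $F=fg=w$ satisfies the full hypothesis of Theorem~\ref{thm1.3}, so that $I(w)\cap J(w)=I(F)\cap J(F)$ has positive measure, as required.
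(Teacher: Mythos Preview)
Your overall strategy---factor $w=fg$ and invoke Theorem~\ref{thm1.3}---matches the paper. The gap is in your verification that $f$ satisfies~\eqref{0a}--\eqref{0b}. You propose to estimate $\log|f|$ from the canonical product, using that the zeros lie in narrow sectors and that ``the CRG distribution of zeros within each sector takes care of the main term.'' But completely regular growth gives only $o(r^{\rho})$ accuracy in the counting of zeros along each Stokes ray, which yields only an $o(r^{\rho})$ error in $\log|f|$. To obtain the $O(r^{\rho}/\log^N r)$ error demanded by Theorem~\ref{thm1.2} via your route you would need a quantitative counting asymptotic of the type~\eqref{e:asymp_ray}, and Steinmetz's theorem does not provide this directly; you have not indicated how to extract it.

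The paper avoids the zero-distribution analysis entirely. It uses the \emph{full strength} of Steinmetz's conclusion~\eqref{5t1}, namely $\log|w(z)|=\operatorname{Re}P_j(z^{1/q})+O(\log|z|)$ in each $S_j$ off small disks around the exceptional zeros~$z_k$. Since there are only $O(\log r)$ such zeros, a direct estimate of the canonical product $g$ gives $|\log|g(z)||=O((\log|z|)^2)$ off these disks, and hence $\log|f(z)|=\operatorname{Re}P_j(z^{1/q})+O((\log|z|)^2)$ there. The decisive remaining step---absent from your plan---is a maximum-principle argument: because $f=w/g$ has \emph{no} zeros in the excluded disks $D(z_k,|z_k|^{1-\varepsilon})$, both $\log|f|$ and $\operatorname{Re}P_j(z^{1/q})$ are harmonic there, so the estimate extends across them and holds throughout the slightly smaller domains~$T_j$. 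This yields~\eqref{0a}--\eqref{0b} with a much better error term than required, with no need to analyse the angular zero distribution of~$f$. (A minor further point: you take $f$ to be the canonical product over the non-exceptional zeros, whereas the paper sets $f=w/g$; these differ by $e^{Q}$ for a polynomial $Q$ of degree up to~$\rho$, which would wreck the growth hypothesis on the residual factor in Theorem~\ref{thm1.3}. Taking $f=w/g$ avoids this.)
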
 
One consequence of this result is that Sixsmith's result \cite{Sixsmith} concerning 
functions of the form~\eqref{intro1} or~\eqref{intro2} remains valid if the $a_k$ occurring
there are polynomials instead of constants. Indeed, such functions satisfy a linear 
differential equation with constant coefficients.
Moreover, instead of $\arg b_{k+1} < \arg b_k +\pi$ and $\arg b_0<\arg b_{n-1}-\pi$ 
it suffices to assume that $\arg b_{k+1}\leq \arg b_k +\pi$ and $\arg b_0\leq\arg b_{n-1}-\pi$.

However, one may actually deduce from Theorem~\ref{thm1.3}
that  Sixsmith's result even holds if the $a_k$ are entire functions of order less than~$1$. We omit the details
of this argument.

We will deduce Theorems~\ref{thm1.2} and~\ref{thm1.3} from a general result saying that the escaping set
of an entire function $f$ has positive measure if both $f(z)$ and the real part of $zf'(z)/f(z)$ 
are large outside a set of small density.
For measurable subsets $X,Y$ of~$\C$, with $Y$ of non-zero Lebesgue measure $\meas(Y)$,
the \emph{density} $\dens(X,Y)$ of $X$ in~$Y$ is defined by
\[
\dens(X,Y)=\frac{\meas(X\cap Y)}{\meas(Y)}.
\]

Let $\beta\colon (0,\infty)\to (0,\infty)$ be a continuous,
increasing function which satisfies $\beta(x)>x$ for large $x$, say $x>x_0$. 
Then $\beta^n(x)\to\infty$ as $n\to\infty$ if $x>x_0$.

Let now $f$ be a transcendental entire function
and consider the sets
$$
A=A(f,\beta)=\left\{z\in\C\colon \re\!\left(\frac{zf'(z)}{f(z)}\right)>64   \text{ and } |f(z)|> \beta(|z|)\right\}
$$ 
and 
$$
B=B(f,\beta)=\left\{z\in A \colon  \re\!\left(\frac{\zeta f'(\zeta)}{f(\zeta)}\right)>0 \text{ for }
|\zeta -z|<32\left|\frac{f(z)}{f'(z)}\right|\right\}.
$$
\begin{theorem} \label{thm1.4}
Let $f$, $\beta$ and $B=B(f,\beta)$ be as above.
Let $\alpha\colon(0,\infty)\to(0,\infty)$ be a continuous, decreasing function which
satisfies $\alpha(r)\to 0$ as $r\to\infty$ and
\begin{equation}\label{1.4a}
\sum_{n=0}^\infty \alpha(\beta^{n}(r_0))<\infty
\end{equation}
for some $r_0>0$.
Suppose that 
\begin{equation}\label{1.4b}
\dens(B,\{z\colon r/2<|z|<2r\})\geq 1-\alpha(r)
\end{equation}
for all large~$r$.
Then $I(f)$ has positive measure.
Moreover, if $f$ does not have multiply-connected wandering domains, then
$I(f)\cap J(f)$ has positive measure.
\end{theorem}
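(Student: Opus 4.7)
The plan is a McMullen-style Cantor construction. I build a nested family of measurable sets $E_n$ inside a fixed disk such that every iterate $f^k(z)$ of $z\in E_n$ lies in $B$ for $0\le k\le n$, while the relative measure loss at each stage is at most $K\alpha(\beta^n(|z_0|))$ for some universal constant~$K$; then~\eqref{1.4a} forces $\bigcap_n E_n$ to have positive measure, and membership in $I(f)$ is automatic from the inequality $|f(z)|>\beta(|z|)\ge|z|$ built into~$A$, since $|f^k(w)|\ge\beta^k(|z_0|)\to\infty$ for $w$ in the intersection. The analytic engine is an expansion lemma at points $z\in B$. Passing to logarithmic coordinates $w=\log\zeta$ and setting $g(w)=\log f(e^w)$ gives $g'(w)=\zeta f'(\zeta)/f(\zeta)=:\phi(\zeta)$. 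By the definition of $B$, $\re g'>0$ on $D(w_0,32/|\phi(z)|)$ around $w_0=\log z$, and $\re g'(w_0)>64$; the Noshiro--Warschawski criterion on this convex disk yields univalence of $g$ there, and restricting to $D(w_0,1/|\phi(z)|)$ together with the Koebe distortion theorem delivers, with absolute constants, (a)~univalence of $f$ on a Euclidean disk $D(z,\rho_z)$ with $\rho_z\asymp|f(z)/f'(z)|$; (b)~$f(D(z,\rho_z))\supset D(f(z),c|f(z)|)$ for some absolute $c\in(0,1)$; (c)~the inverse branch $f^{-1}\colon D(f(z),c|f(z)|)\to D(z,\rho_z)$ has bounded distortion. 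The specific thresholds $32$ and $64$ are arranged so that (a)--(c) hold uniformly in~$z$: the bound $|\phi(z)|>64$ makes $g(D(w_0,1/|\phi(z)|))$ fit inside a $w$-disk of diameter less than~$2\pi$, so exponentiation recovers~$f$ injectively.

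With the expansion lemma in hand the induction is routine. Fix $z_0\in B$ with $|z_0|$ so large that~\eqref{1.4a} and~\eqref{1.4b} apply at every iterate, and set $E_0=D(z_0,\rho_{z_0}/2)$. Inductively, assume $E_n$ is a disjoint union of Koebe-type disks centered at level-$n$ seeds~$z$ with $z,f(z),\dots,f^n(z)\in B$. At each such seed, the expansion lemma at $f^n(z)$ together with composition with the previous inverse branches produces a univalent branch of $(f^{n+1})^{-1}$ on $D(f^{n+1}(z),c|f^{n+1}(z)|)$ with bounded distortion; the distortion aggregation is kept uniformly bounded by working at each level with a suitable Koebe subdisk of the full domain of univalence. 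Pulling~\eqref{1.4b} through this branch gives
\begin{equation}\label{pp:child-density}
\frac{\meas\bigl(\{w\in D(z,\rho_z)\colon f^{n+1}(w)\in B\}\bigr)}{\meas(D(z,\rho_z))}\ge 1-K\alpha(|f^n(z)|)\ge 1-K\alpha(\beta^n(|z_0|)).
\end{equation}
Covering the left-hand set by a disjoint Vitali family of new Koebe disks at level~$n+1$ and summing over level-$n$ seeds yields $\meas(E_{n+1})\ge(1-K\alpha(\beta^n(|z_0|)))\meas(E_n)$, hence
\begin{equation}\label{pp:product}
\meas\!\left(\bigcap_n E_n\right)\ge \meas(E_0)\prod_{n\ge 0}\bigl(1-K\alpha(\beta^n(|z_0|))\bigr)>0
\end{equation}
by~\eqref{1.4a}. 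This intersection is contained in~$I(f)$.

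For the second assertion, suppose $w\in\bigcap_n E_n$ lies in a Fatou component~$U$; then $f^n\to\infty$ on~$U$, so $U$ is an escaping Fatou component. By hypothesis $U$ is not a multiply connected wandering domain, and every Baker domain is simply connected, so each $U_n=f^n(U)$ is simply connected. On the other hand, $w$ lies in a nested sequence of Koebe disks $D_n\subset E_n$ with $\diam(D_n)\to 0$ on which $f^n$ is univalent with bounded distortion and image containing $D(f^n(w),c|f^n(w)|)$; comparing the hyperbolic metric of the simply connected $U_n$ with that of the contained disk $D(f^n(w),c|f^n(w)|)$ yields a uniform positive lower bound for the hyperbolic diameter of $f^n(D_n)$ in $U_n$, while normality of $\{f^n\}$ on~$U$ combined with $\diam(D_n)\to 0$ forces this diameter to zero, a contradiction. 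Hence $\bigcap_n E_n\subset J(f)$ modulo a null set, so $\meas(I(f)\cap J(f))>0$. The main obstacle is the quantitative expansion lemma~(a)--(c), which converts ``$\re\phi>0$ on $D(z,32|f/f'|)$'' together with ``$\re\phi(z)>64$'' into absolute-constant Euclidean univalence and image-size estimates; a secondary technical point is the Vitali covering at each inductive step that aggregates~\eqref{pp:child-density} into~\eqref{pp:product}, combined with the distortion-aggregation argument that keeps $K$ independent of~$n$.
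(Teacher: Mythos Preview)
Your approach to the first conclusion is essentially the paper's. Both use the same expansion mechanism: in $\log$-coordinates, the Noshiro--Warschawski criterion on $D(w_0,32/|\phi(z)|)$ gives univalence of $g=\log f\circ\exp$, and Koebe then yields a univalent branch of $\log f$ on $\Delta_2(z)=D(z,32/|L(z)|)$ whose image contains $D(\log f(z),8)$, hence $f(\Delta_2(z))\supset\ann(|f(z)|)$. The paper organises the measure estimate top-down: with $T_n=\{z\in B:f^k(z)\in B,\ 0\le k\le n\}$ it covers $T_n\cap\ann(R)$ by disks $D(z,r_n(z))$ via the \emph{Besicovitch} covering theorem and bounds $\meas((T_n\setminus T_{n+1})\cap\ann(R))\le c_2\,\alpha(\beta^{n+1}(R/2))$, then sums. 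Your bottom-up nested construction is equivalent; note, though, that you need a Besicovitch-type lemma rather than Vitali, since each point $w$ carries a single prescribed radius, not arbitrarily small ones.

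The Julia-set step has a genuine gap. Your contradiction rests on a \emph{lower} bound for the $U_n$-hyperbolic diameter of $f^n(D_n)$, obtained by ``comparing the hyperbolic metric of $U_n$ with that of the contained disk $D(f^n(w),c|f^n(w)|)$''. But the inclusion $D\subset U_n$ gives $\rho_{U_n}\le\rho_D$, so it only yields \emph{upper} bounds for $U_n$-hyperbolic quantities; nothing prevents $U_n$ from being vastly larger than $D$, making that hyperbolic diameter as small as you please. Thus your (a) and (b) both push the diameter to~$0$ and no contradiction results. The fix is to run the logic the other way: Schwarz--Pick plus $\operatorname{diam}(D_n)\to 0$ give $\operatorname{diam}^{hyp}_{U_n}(D)\to 0$, and since $D$ has Euclidean radius $c|f^n(w)|$, simple connectivity of $U_n$ (Koebe quarter theorem on the Riemann map) forces $\operatorname{dist}(f^n(w),\partial U_n)/|f^n(w)|\to\infty$, so eventually $0\in U_n$; this contradicts disjointness of wandering components (or forces a periodic $U_n$ to equal~$\C$). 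A cleaner variant uses the full strength of your own lemma at scale $\Delta_2$: since $f(\Delta_2(f^n(w)))\supset\ann(|f^{n+1}(w)|)$, the simply connected $U_{n+1}$ contains a round annulus about~$0$, hence $D(0,|f^{n+1}(w)|/2)\subset U_{n+1}$, and the same contradiction follows at once. (The paper's written proof in fact stops after $\meas(I(f))>0$ and treats the $J(f)$ assertion as routine; you are not missing a trick from the text, but your stated argument does need this repair.)
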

The proof will show that if $r_0$ is large, then the set 
of all $z$ such $|z|>r_0$ and $|f^n(z)|>\beta^n(r_0)$ for all $n\in\N$ has positive measure,
and if $f$ does not have multiply-connected wandering domains, then the intersection of
this set with $J(f)$ also has positive measure.

It follows that if $\beta(r)\geq \exp r^\mu$ for some $\mu>0$ and all large~$r$,
then the conclusion of Theorem~\ref{thm1.4} holds with $I(f)$ replaced by the 
\emph{fast escaping set} $A(f)$ introduced in~\cite{Bergweiler1999}.
This set now plays an important role in transcendental dynamics; see, e.g., \cite{Rippon2005a,Rippon2012}.
In the proofs of Theorem~\ref{thm1.2}--\ref{thm1.1} we will actually have $\beta(r)=\exp r^{\rho-o(1)}$
so that $I(f)$ may be replaced by $A(f)$ in these results.

In order to apply Theorem~\ref{thm1.4} to a function $f$ we need estimates for 
the real part of $zf'(z)/f(z)$. 
For functions $f$ of completely regular growth, such estimates were obtained by
Gol'dberg and Korenkov~\cite{Goldberg1978,Goldberg1981}
as well as Gol'dberg, Sodin and Strochik~\cite{Goldberg1992}; see Section~\ref{further}
for a discussion of their results. However, for our applications these results are 
not sufficient as we need explicit estimates for the error terms arising there.
Our proofs of Theorems~\ref{thm1.2}--\ref{thm1.1} are independent of the
results in~\cite{Goldberg1978,Goldberg1981,Goldberg1992}.

A basic result of the theory of functions of completely regular growth says that the 
defining condition~\eqref{def-crg} is equivalent to a condition saying that the zeros are
regularly distributed in some sense; cf.~\cite[Chapters II and III]{Levin1964}. 
Assuming conditions more precise than~\eqref{def-crg} corresponds to 
stronger regularity conditions for the distribution of the zeros; see, e.g., \cite{Agranovich,VynKh04,VynKh05}.
We give a regularity condition for the distribution of zeros which implies
that the hypotheses of Theorem~\ref{thm1.2} hold.
Here we restrict to the case that the order is not an integer, although results for 
integer order could also be achieved. (The distinction between integer and non-integer 
order is standard in the theory; cf.~\cite[Sections~II.2 and~II.3]{Levin1964}.)
\begin{theorem} \label{thm1.5}
Let $f$ be an entire function of order $\rho\in (0,\infty)\backslash\N$.
Let $L\in\N$ and $\varepsilon(r)=1/\log^L(r)$.
Suppose that all but finitely many zeros of $f$ are contained in
$\{re^{i\theta} \colon  r>\exp^L(1), |\theta|<\varepsilon(r)\}$
and that
\begin{equation} \label{e:asymp_ray}
 | n(r, 0)- c r^{\rho(r)}|=
O\!\left(\varepsilon(r)r^{\rho(r)}\right)
\end{equation}
for some $c>0$ and some proximate order $\rho(r)$ satisfying  
\begin{equation}\label{e:0x}
\rho'(r) r\log r =O(\varepsilon(r))
\end{equation}
as $r\to\infty$.
Then
\begin{equation}\label{e:0a}
\log|f(re^{i\theta})|=
c\frac{\pi\cos((\theta-\pi)\rho(r)) }{\sin \pi \rho(r)} r^{\rho(r)}
+ O\!\left(\sqrt[4]{\varepsilon(r)}r^{\rho(r)}\right)
\end{equation}
for
\begin{equation}\label{e:0b}
\sqrt{\varepsilon(r)}   \leq \theta\leq 2\pi- \sqrt{\varepsilon(r)}.
\end{equation}
\end{theorem}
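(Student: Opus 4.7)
The strategy is to use Hadamard's factorization to reduce to analyzing a canonical product, move its zeros onto the positive real axis, and then evaluate the resulting explicit integral asymptotically with an error bound precise enough to match the statement.

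Since $\rho\in(0,\infty)\backslash\N$, set $q=\lfloor\rho\rfloor<\rho$ and write $f(z)=z^me^{P(z)}\Pi(z)$ by Hadamard's theorem, where $\deg P\leq q$ and $\Pi$ is the canonical product of genus $q$ formed from the nonzero zeros of~$f$. Because $q<\rho$, the contribution of $z^me^{P(z)}$ is $O(r^q)$, which is of strictly smaller order than $\sqrt[4]{\varepsilon(r)}\,r^{\rho(r)}$ and can be absorbed into the error. Let $\Pi_0$ be the canonical product whose zeros are $\{|a_k|\}$ in place of $\{a_k\}$, so that they lie exactly on the positive real axis. Combining the hypothesis $|\arg a_k|<\varepsilon(|a_k|)$, a Taylor expansion of $\log|E_q(z/a)|$ in $\arg a$, and the counting function estimate~\eqref{e:asymp_ray}, one checks that $\log|\Pi(re^{i\theta})|-\log|\Pi_0(re^{i\theta})|$ can be absorbed into the error in~\eqref{e:0a} uniformly in the range~\eqref{e:0b}. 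Thus it suffices to prove~\eqref{e:0a} for~$\Pi_0$.

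For $\Pi_0$ one has the Stieltjes representation
\[
\log|\Pi_0(re^{i\theta})|=\int_0^\infty \log|E_q(re^{i\theta}/t)|\,dn(t,0).
\]
Splitting $n(t,0)=ct^{\rho(t)}+R(t)$ with $|R(t)|=O(\varepsilon(t)\,t^{\rho(t)})$ by~\eqref{e:asymp_ray}, and using~\eqref{e:0x} to replace $\rho(t)$ by $\rho(r)$ in the exponent up to a controlled error, the principal contribution after the substitution $t=rs$ becomes
\[
cr^{\rho(r)}\rho(r)\int_0^\infty \log|E_q(e^{i\theta}/s)|\,s^{\rho(r)-1}\,ds.
\]
A standard Mellin transform computation, valid because $\rho(r)\notin\N$ for large~$r$ and $\theta\in(0,2\pi)$, identifies this integral with $\pi\cos((\theta-\pi)\rho(r))/\sin\pi\rho(r)$, which is exactly the main term in~\eqref{e:0a}.

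The hard part will be producing the precise error bound $O(\sqrt[4]{\varepsilon(r)}\,r^{\rho(r)})$. Three error sources contribute: (i) the passage $\Pi\to\Pi_0$; (ii) the remainder $R(t)$ in the counting function; and (iii) the replacement of $\rho(t)$ by $\rho(r)$ in the exponent. Each produces an integral against $\log|E_q(re^{i\theta}/t)|$, which is delicate near $t=r$ because $\log|E_q(e^{i\theta})|$ has only a logarithmic bound as $\theta\to 0$. The plan is, for each such integral, to split at a scale $|t-r|<\eta r$ and to optimize the auxiliary parameter $\eta=\eta(r)\to 0$ so as to balance the local singular contribution against the globally $\varepsilon$-sized estimate; compounding the resulting optimizations across the several error sources is what yields the $\sqrt[4]{\varepsilon(r)}$ factor. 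The angular restriction $\theta\geq\sqrt{\varepsilon(r)}$ in~\eqref{e:0b} is precisely the minimal angular scale at which $\log|E_q(e^{i\theta}/s)|$ remains uniformly controlled at $s=1$ on the scale compatible with this optimized error.
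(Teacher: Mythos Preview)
Your architecture is the same as the paper's: reduce via Hadamard to a canonical product, replace the zeros $a_k$ by $|a_k|$ to get an auxiliary product with zeros on the positive axis, obtain the main term from an explicit integral, and then control the three error sources you list. Two technical points, however, differ from the paper and deserve correction, because your sketch of the error analysis would not produce $\sqrt[4]{\varepsilon(r)}$ as written.

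First, once you try to bound $\int\log|E_q(re^{i\theta}/t)|\,dR(t)$ you will be forced to integrate by parts (the total variation of $R$ is not small), and the resulting kernel is $\re\bigl(z^{q+1}/(t^{q+1}(t-z))\bigr)$, which is exactly what the paper works with from the start via the formula $\log f^*(z)=-z^{q+1}\int_0^\infty n(t,0)\,t^{-q-1}(t-z)^{-1}\,dt$. The singularity near $t=r$ is therefore $1/|t-z|$, not logarithmic; the angular restriction $\theta\ge\sqrt{\varepsilon(r)}$ enters through the lower bound $|t-z|\ge r\sin\theta\gtrsim r\sqrt{\varepsilon(r)}$, not through a bound on $\log|E_q(e^{i\theta})|$.

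Second, the decomposition is not a local window $|t-r|<\eta r$ with an optimized $\eta$. The paper splits at $t=a(r)r$ and $t=b(r)r$ with $b(r)=1/a(r)=\exp(\varepsilon(r)^{-1/4})$; on the outer pieces one bounds $n(t,0)$ and $t^{\rho(r)}$ \emph{separately} (their difference is not small there), getting $O(a(r)^{\delta})=o(\varepsilon)$ from the gap $\delta=\min(\rho-q,q+1-\rho)>0$. On the middle piece the hypothesis $\rho'(r)r\log r=O(\varepsilon(r))$ gives $|\rho(t)-\rho(r)|\le C\varepsilon(r)\log b(r)/\log r=C\varepsilon(r)^{3/4}/\log r$, hence $|t^{\rho(t)}-t^{\rho(r)}|=O(\varepsilon(r)^{3/4}t^{\rho})$. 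Combined with $|t-z|^{-1}\le C/(r\sqrt{\varepsilon(r)})$ this yields the $\varepsilon^{3/4}/\sqrt{\varepsilon}=\varepsilon^{1/4}$ bound. So the $\sqrt[4]{\varepsilon}$ arises from a single balance---the width of the middle range against the proximate-order hypothesis and the angular lower bound on $|t-z|$---not from compounding several optimizations. The comparison $\Pi\to\Pi_0$ is handled as you say (the paper writes it as an arc integral rather than a Taylor expansion) and contributes only $O(\sqrt{\varepsilon}\,r^{\rho})$.
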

We remark that standard constructions of proximate orders as in~\cite{Levin1964} yield $|\rho'(r) r\log r|\leq 1/\log\log r$.
Thus~\eqref{e:0x} is not very restrictive.

We also note that if $\rho(r)-\rho=O\!\left(\sqrt[4]{\varepsilon(r)}\right)$, then~\eqref{e:0a} simplifies
to
\begin{equation}\label{e:0c}
\log|f(re^{i\theta})|=h(\theta) r^{\rho(r)}+O\!\left(\sqrt[4]{\varepsilon(r)}r^{\rho(r)}\right)
\end{equation}
where
\begin{equation}\label{e:0d}
h(\theta)=c\frac{\pi \cos \rho(\theta-\pi)}{\sin \pi \rho}.
\end{equation}
If $0<\rho\leq 1/2$, then we have $h(\theta)>0$ for $0<\theta<2\pi$
so that the hypotheses of Theorem~\ref{thm1.2} are satisfied (for $N=L+1$).

Theorem~\ref{thm1.5} immediately gives a corresponding result for the case
that the zeros are contained in finitely
many domains $S_j=\{re^{i\theta} \colon |\theta-\theta_j|<\varepsilon(r)\}$
and their number $n_j(r,0)$ in $S_j\cap \{z\colon |z|\leq r\}$ satisfies 
$| n_j(r, 0)- c_j r^{\rho(r)}|=O\!\left(\varepsilon(r)r^{\rho(r)}\right)$
for certain $c_j>0$.
\section{Proof of Theorem \ref{thm1.4}}
We first state a number of lemmas that we require. Here and in the following we denote 
for $a\in\C$ and $r>0$ by $D(a,r)=\{z\colon |z-a|<r\}$ the disk of radius $r$ around $a$.

The Koebe distortion theorem and the Koebe one quarter
theorem are usually stated for functions univalent in the unit disk $D(0,1)$.
The following lemma is deduced by a simple transformation from this.
\begin{lemma}\label{koebe1}
Let $f$ be univalent in  $D(a,r)$ and let 
$0<\rho<1$.
Then
\[
\frac{\rho}{(1+\rho)^2}
|f'(a)|
\leq \left| \frac{f(z)-f(a)}{z-a} \right|
\leq
\frac{\rho}{(1-\rho)^2}
|f'(a)|
\quad\text{for } |z-a|=\rho r
\]
and
\[
\frac{1-\rho}{(1+\rho)^3}
|f'(a)|
\leq |f'(z)|
\leq
\frac{1+\rho}{(1-\rho)^3}
|f'(a)|
\quad\text{for } |z-a|\leq \rho r.
\]
Moreover,
\[
f(D(a,r))\supset D(f(a),r|f'(a)|/4).
\]
\end{lemma}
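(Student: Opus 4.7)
The plan is to reduce to the standard Koebe distortion and one-quarter theorems on the unit disk via an affine change of variables. Since $f$ is univalent on $D(a,r)$, one has $f'(a)\neq 0$, and I would define
\[
g(w)=\frac{f(a+rw)-f(a)}{rf'(a)}
\]
for $w\in D(0,1)$. Then $g$ is univalent on $D(0,1)$ and satisfies the standard normalization $g(0)=0$, $g'(0)=1$, so the classical Koebe distortion estimates apply directly.

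Specifically, for $|w|=\rho$ the classical theorem gives $\rho/(1+\rho)^2\leq |g(w)|\leq \rho/(1-\rho)^2$, and for $|w|\leq\rho$ it gives $(1-\rho)/(1+\rho)^3\leq |g'(w)|\leq (1+\rho)/(1-\rho)^3$. I would then substitute $w=(z-a)/r$, noting that
\[
g(w)=\frac{f(z)-f(a)}{rf'(a)}, \qquad g'(w)=\frac{f'(z)}{f'(a)},
\]
and clear denominators by $|f'(a)|$ (and, for the first chain, divide by $|z-a|/r=\rho$) to obtain the two claimed inequalities of the lemma.

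For the final inclusion, the classical Koebe one-quarter theorem yields $g(D(0,1))\supset D(0,1/4)$. Applying the affine map $\zeta\mapsto f(a)+rf'(a)\zeta$ to both sides and using $f(a+rw)=f(a)+rf'(a)g(w)$ translates this to $f(D(a,r))\supset D(f(a),r|f'(a)|/4)$, as required.

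There is no substantive obstacle here: the lemma is a routine rescaling of classical results, and the only care needed is to verify that the substitution $w=(z-a)/r$ transforms the normalized inequalities correctly into the unnormalized ones stated.
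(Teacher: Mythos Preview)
Your approach is correct and is exactly what the paper indicates: it states just before the lemma that the result ``is deduced by a simple transformation'' from the classical Koebe distortion and one-quarter theorems on the unit disk, and your normalization $g(w)=(f(a+rw)-f(a))/(rf'(a))$ is precisely that transformation. (Note, incidentally, that carrying out your substitution in the first chain actually yields the sharper bounds $|f'(a)|/(1\pm\rho)^2$ for the difference quotient; the extra factor~$\rho$ printed in the lemma appears to be a typographical slip, but this does not affect the method or any later use in the paper.)
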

The following lemma is a direct consequence, but it can also easily be proved directly by a simple compactness argument.
\begin{lemma}\label{koebe2}
Let $\Omega$ be a domain in $\C$ and let $Q$ be a compact subset of $\Omega$.
Then there exists a positive constant $C$ such that if $f$ is univalent in $\Omega$ and
$z,\zeta\in Q$, then $|f'(\zeta)|\leq C|f'(z)|$.
\end{lemma}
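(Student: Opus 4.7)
The plan is to derive the uniform bound from Lemma~\ref{koebe1} by a chaining argument. A preliminary (purely topological) step is to replace $Q$ by a slightly larger compact \emph{connected} set $K$ with $Q\subset K\subset\Omega$. Since $\Omega$ is a domain, hence path-connected, such a $K$ exists: cover $Q$ by finitely many closed disks contained in $\Omega$, join the finitely many connected components of this cover by polygonal arcs inside $\Omega$, and take $K$ to be the resulting compact set. Set $\delta=\tfrac12\operatorname{dist}(K,\partial\Omega)>0$, so that $D(a,2\delta)\subset\Omega$ for every $a\in K$.

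For every function $f$ univalent in $\Omega$ and every $a\in K$, applying Lemma~\ref{koebe1} to $f$ on $D(a,2\delta)$ with $\rho=\tfrac12$ gives
\[
|f'(z)|\leq 12\,|f'(a)| \quad\text{whenever } |z-a|\leq\delta,
\]
and the same estimate with $z$ and $a$ interchanged. The constant $12$ is absolute and, in particular, independent of~$f$. Thus $|f'|$ can vary by at most a factor of~$12$ across any closed disk of radius~$\delta$ centered in~$K$.

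Now cover the compact set $K$ by finitely many open disks $D(w_1,\delta/2),\dots,D(w_N,\delta/2)$ with $w_j\in K$. Connectedness of $K$ implies that any two points $z,\zeta\in K$ are joined by a chain of at most $N$ of these disks in which consecutive disks have non-empty intersection; otherwise $K$ would split as a disjoint union of the unions of the disks in two non-trivial subfamilies. Since each $w_j$, each intersection point of consecutive disks, and finally $z$ and $\zeta$ all lie within distance~$\delta$ of some center appearing in the chain, a repeated application of the local estimate yields $|f'(\zeta)|\leq 12^{2N}|f'(z)|$. Hence $C=12^{2N}$ works, and depends only on $Q$ and~$\Omega$.

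The only even mildly delicate part is the topological construction of~$K$; the analytic input is supplied entirely by Koebe's theorem. Because the size of~$C$ is immaterial (only its independence from~$f$ matters), no real obstacle arises.
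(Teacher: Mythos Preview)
Your argument is correct and is precisely the kind of derivation the paper has in mind: the paper does not actually give a proof of this lemma, it only remarks that the statement ``is a direct consequence'' of the Koebe distortion theorem (Lemma~\ref{koebe1}) or ``can also easily be proved directly by a simple compactness argument.'' Your chaining argument is the natural elaboration of the first of these two suggestions, so there is nothing to compare against in the paper beyond that one-line remark.

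Two very minor points, neither of which affects the validity. First, when you say ``the same estimate with $z$ and $a$ interchanged,'' what you are really using is the lower bound in Lemma~\ref{koebe1}, which gives $|f'(a)|\le \tfrac{27}{4}|f'(z)|$ for $|z-a|\le\delta$; since $\tfrac{27}{4}<12$ your stated bound is of course still valid. Second, if $\Omega=\mathbb C$ then $\operatorname{dist}(K,\partial\Omega)=\infty$; in that case simply take any finite $\delta>0$ (or observe that an entire univalent function is affine, so the lemma is trivial).
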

The constant $C$ in Lemma~\ref{koebe2} is also called the distortion of $f$ on~$Q$.
The following result says that functions of distortion $C$  increase the density of
sets at most by a factor $C^2$.
\begin{lemma}\label{dist}
Let $f$ be univalent in a domain containing measurable sets $P$ and~$Q$.
Suppose that there exists a positive constant $C$ such that 
$|f'(\zeta)|\leq C|f'(z)|$ for all $\zeta,z\in Q$. Then 
$\dens(f(P),f(Q))\leq C^2 \dens(P,Q)$.
\end{lemma}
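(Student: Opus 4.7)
The plan is to reduce the density inequality to the change-of-variables formula for univalent holomorphic maps, namely $\meas(f(X))=\int_X |f'|^2\,dA$ for any measurable set $X$ in the domain of a univalent map~$f$. This identity holds because the real Jacobian of $z\mapsto f(z)$ equals $|f'(z)|^2$, and it turns the density on the left of the inequality into a ratio of integrals that can be estimated pointwise.

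First I would note that the injectivity of $f$ gives $f(P)\cap f(Q)=f(P\cap Q)$. Applying the change-of-variables formula to $P\cap Q$ and to $Q$, one obtains
\[
\dens(f(P),f(Q)) \;=\; \frac{\meas(f(P\cap Q))}{\meas(f(Q))} \;=\; \frac{\int_{P\cap Q}|f'(\zeta)|^2\,dA(\zeta)}{\int_{Q}|f'(z)|^2\,dA(z)}.
\]
Next I would use the distortion hypothesis in its squared form, $|f'(\zeta)|^2\leq C^2|f'(z)|^2$ for all $\zeta,z\in Q$. Fixing $z\in Q$ and integrating the bound over $\zeta\in P\cap Q\subset Q$ yields
\[
\int_{P\cap Q}|f'(\zeta)|^2\,dA(\zeta)\;\leq\; C^2|f'(z)|^2\meas(P\cap Q).
\]
Integrating this inequality in $z$ over $Q$ and rearranging produces
\[
\frac{\int_{P\cap Q}|f'|^2\,dA}{\int_{Q}|f'|^2\,dA}\;\leq\; C^2\,\frac{\meas(P\cap Q)}{\meas(Q)},
\]
which is exactly $\dens(f(P),f(Q))\leq C^2\dens(P,Q)$.

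There is no genuine obstacle in this argument: the only step that requires care is invoking the change-of-variables formula, which is standard for univalent holomorphic maps, together with the observation that $P\cap Q$ is measurable whenever $P$ and $Q$ are. Everything else is a one-line manipulation of the pointwise distortion bound together with Fubini's theorem.
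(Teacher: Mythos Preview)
Your proof is correct and follows essentially the same route as the paper: both use $f(P)\cap f(Q)=f(P\cap Q)$, the change-of-variables formula $\meas(f(X))=\int_X|f'|^2\,dA$, and the distortion bound $|f'(\zeta)|^2\le C^2|f'(z)|^2$ to compare $\int_{P\cap Q}|f'|^2$ with $\int_Q|f'|^2$. The paper passes explicitly through $\sup_Q|f'|^2\le C^2\inf_Q|f'|^2$, whereas you integrate the pointwise bound over $Q$ and invoke Fubini, but this is only a cosmetic difference.
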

\begin{proof}
We have 
\begin{align*}
&\quad\ \meas(f(P)\cap f(Q))
= \meas(f(P\cap Q))
\\ &
=
\int_{P\cap Q}|f'(z)|^2dx\,dy
\leq
\meas(P\cap Q)\sup_{z\in Q}|f'(z)|^2
\\ &
\leq
C^2\meas(P\cap Q)\inf_{z\in Q}|f'(z)|^2
=
C^2 \dens(P,Q) \meas(Q) \inf_{z\in Q}|f'(z)|^2
\\ &
\leq
C^2 \dens(P,Q) 
\int_{Q}|f'(z)|^2dx\,dy
=C^2 \dens(P,Q) \meas(f(Q)),
\end{align*}
from which the conclusion follows.
\end{proof}
The following lemma is standard~\cite[Proposition~1.10]{Pommerenke1992}.
\begin{lemma}\label{prp}
Let $\Omega$ be a convex domain and let $f\colon \Omega\to\C$ be holomorphic.
If $\re f'(z)>0$ for all $z\in\Omega$, then $f$ is univalent.
\end{lemma}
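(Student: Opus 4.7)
The plan is to prove univalence by directly verifying that $f$ separates points. Given two distinct points $z_1,z_2\in\Omega$, I would use the convexity hypothesis to conclude that the straight-line segment $\gamma(t)=z_1+t(z_2-z_1)$, $t\in[0,1]$, lies entirely in $\Omega$. Since $f$ is holomorphic in $\Omega$, the fundamental theorem of calculus applied along $\gamma$ gives
\[
f(z_2)-f(z_1)=(z_2-z_1)\int_0^1 f'(\gamma(t))\,dt.
\]

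I would then divide both sides by the nonzero number $z_2-z_1$ and take real parts, obtaining
\[
\re\!\left(\frac{f(z_2)-f(z_1)}{z_2-z_1}\right)=\int_0^1 \re f'(\gamma(t))\,dt.
\]
The hypothesis $\re f'>0$ on $\Omega$ forces the integrand, and hence the integral, to be strictly positive. Therefore the left-hand side is nonzero, and in particular $f(z_2)\neq f(z_1)$. Since $z_1,z_2\in\Omega$ were arbitrary distinct points, $f$ is injective, which is the desired univalence.

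The argument is essentially forced and there is no substantial obstacle; the two points one must not skip are the following. First, convexity of $\Omega$ is used in a crucial way: without it, the segment $\gamma$ need not stay in $\Omega$ and $f'\circ\gamma$ would not even be defined, so some variant of the hypothesis would be required (for instance, $\re(e^{i\alpha}f'(z))>0$ along a suitable curve family). Second, one must divide by $z_2-z_1$ \emph{before} taking the real part: a direct bound of the form $|f(z_2)-f(z_1)|\le\int|f'|$ loses the sign information and cannot produce strict inequality, so it is the real-linear functional $\re$ applied to the normalized difference quotient that carries the positivity hypothesis through to the conclusion.
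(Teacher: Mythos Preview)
Your argument is correct and is precisely the standard proof of this classical fact (often attributed to Noshiro--Warschawski). The paper itself gives no proof but simply cites \cite[Proposition~1.10]{Pommerenke1992}, where exactly this integration-along-a-segment argument appears; so your proposal matches the intended approach.
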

The next lemma characterizes convex univalent functions~\cite[Theorem~2.11]{Duren1983}.
\begin{lemma}\label{dur}
Let $f\colon D(0,1)\to \C$ be holomorphic. 
Then $f$ maps $D(0,1)$ univalently onto a convex domain if and only if
$\re (1+zf''(z)/f'(z))>0$ for all $z\in D(0,1)$.
\end{lemma}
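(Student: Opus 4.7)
The plan rests on identifying $\re(1+zf''(z)/f'(z))$ as the rate at which the tangent angle rotates along the image of the circle $|z|=r$ under $f$. For $0<r<1$, parametrize $\gamma_r(\theta)=f(re^{i\theta})$; then $\gamma_r'(\theta)=ire^{i\theta}f'(re^{i\theta})$, so $\arg\gamma_r'(\theta)=\pi/2+\theta+\arg f'(re^{i\theta})$. Differentiating in $\theta$ and using $\partial_\theta\log f'(re^{i\theta})=ire^{i\theta}f''(re^{i\theta})/f'(re^{i\theta})$ together with $\im(iw)=\re(w)$ yields the identity
\[
\frac{\partial}{\partial\theta}\arg\gamma_r'(\theta)=1+\re\!\left(\frac{zf''(z)}{f'(z)}\right),\qquad z=re^{i\theta}.
\]
A smooth Jordan curve bounds a convex region precisely when its tangent angle is a non-decreasing function of $\theta$, so the lemma reduces to transferring positivity of this quantity between the circles $|z|=r$ and the entire disk.

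For the \emph{if} direction, assume $\re(1+zf''/f')>0$ on $D(0,1)$; in particular $f'$ has no zeros. The identity above shows that $\arg\gamma_r'(\theta)$ is strictly increasing in $\theta$ with total variation $2\pi$, so $\gamma_r$ is a simple closed strictly convex curve. The argument principle then shows that $f$ is univalent on $D(0,r)$ and maps it onto the convex region bounded by $\gamma_r$. Taking the nested union as $r\to 1^-$, $f(D(0,1))$ is a union of convex domains each containing the previous, hence convex, and $f$ is univalent on $D(0,1)$.

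For the \emph{only if} direction, suppose $f$ is univalent on $D(0,1)$ with convex image $\Omega$. The crucial input is the classical fact (Study's theorem) that each sub-image $f(D(0,r))$ with $0<r<1$ is itself convex. Granted this, every $\gamma_r$ is a convex Jordan curve, its tangent angle is non-decreasing in $\theta$, and the identity gives $\re(1+zf''/f')\geq 0$ throughout $D(0,1)$. This quantity is harmonic on $D(0,1)$ and equals $1$ at the origin, so the strong minimum principle upgrades the inequality to $>0$.

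The principal obstacle is the Study-theorem step in the \emph{only if} direction: that sub-disks of a convex univalent image land on convex sub-domains. The natural approach is that, given $\zeta_1,\zeta_2\in D(0,r)$ and $t\in(0,1)$, convexity of $\Omega$ supplies some $\eta\in D(0,1)$ with $f(\eta)=tf(\zeta_1)+(1-t)f(\zeta_2)$, and one must argue $|\eta|\leq r$; this is a Schwarz-lemma / subordination argument applied to an auxiliary self-map of the disk built out of $f$, $f^{-1}$, and the affine interpolation between the two chosen points. By contrast the tangent-angle computation and the minimum-principle step are routine.
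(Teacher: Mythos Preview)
The paper does not give its own proof of this lemma; it simply cites \cite[Theorem~2.11]{Duren1983}. Your proposal reproduces essentially the classical argument found there: the tangent-angle identity
\[
\frac{\partial}{\partial\theta}\arg\gamma_r'(\theta)=\re\!\left(1+\frac{zf''(z)}{f'(z)}\right),\qquad z=re^{i\theta},
\]
combined with the argument principle for the \emph{if} direction, and Study's theorem plus the strong minimum principle for harmonic functions in the \emph{only if} direction. The sketch is correct, and the step you single out as the principal obstacle---that each $f(D(0,r))$ is convex when $f(D(0,1))$ is---is indeed handled by exactly the Schwarz-lemma/subordination device you describe: for $|\zeta_1|,|\zeta_2|\le 1$ and $t\in(0,1)$, the map $w\mapsto f^{-1}\bigl(tf(w\zeta_1)+(1-t)f(w\zeta_2)\bigr)$ is a self-map of $D(0,1)$ fixing the origin, so its value at $w=r$ has modulus at most~$r$.
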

The following result is known as the Besicovitch covering theorem~\cite[Theorem~3.2.1]{deGuzman1981}. 
Here we use our notation $D(a,r)$ for disks in $\C$  also for balls in~$\R^n$.
\begin{lemma} \label{besicovitch}
Let $K\subset \R^n$ be bounded and
$r\colon K\to(0,\infty)$. Then there exists an at most countable subset $L$
of $K$
satisfying
\[
K\subset \bigcup_{x\in L} D(x,r(x))
\]
such that no point in $\R^n$ is contained in more than $4^{2n}$
of the balls $D(x,r(x))$, $x\in L$.
\end{lemma}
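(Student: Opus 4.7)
The plan is a greedy selection in decreasing order of radii. First I would reduce to the case $M := \sup_{x \in K} r(x) < \infty$; otherwise some $x \in K$ has $r(x) \geq \operatorname{diam}(K)$ and $D(x, r(x))$ alone covers $K$. Assuming $M < \infty$, set $K_0 := K$ and recursively, while $K_j \neq \emptyset$, let $a_j := \sup_{x \in K_j} r(x)$, choose $x_j \in K_j$ with $r(x_j) \geq a_j/2$, and put $K_{j+1} := K_j \setminus D(x_j, r(x_j))$. By construction, for $i < j$ one has $x_j \notin D(x_i, r(x_i))$, hence $|x_i - x_j| \geq r(x_i)$, and since $K_j \subset K_i$ also $r(x_j) \leq a_j \leq a_i \leq 2 r(x_i)$.

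For the covering claim, the shrunken balls $D(x_j, r(x_j)/5)$ are pairwise disjoint: for $i<j$,
\[
\tfrac{1}{5} r(x_i) + \tfrac{1}{5} r(x_j) \leq \tfrac{3}{5} r(x_i) < r(x_i) \leq |x_i - x_j|.
\]
As these disjoint balls all lie in the bounded set $K + D(0, M) \subset \R^n$, a volume comparison forces either termination of the construction (in which case $K \subset \bigcup_j D(x_j, r(x_j))$ directly) or $r(x_j) \to 0$. In the latter case any uncovered $x \in K$ would satisfy $x \in K_j$ for every $j$, giving $0 < r(x) \leq a_j \leq 2 r(x_j) \to 0$, a contradiction. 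Hence $L := \{x_j\}$ covers $K$.

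The heart of the proof is the multiplicity bound. Fix $y \in \R^n$ and put $I := \{j : y \in D(x_j, r(x_j))\}$. For $i < j$ in $I$ we have $|y - x_i| < r(x_i)$, $|y - x_j| < r(x_j) \leq 2 r(x_i)$ and $|x_i - x_j| \geq r(x_i)$. The standard strategy is to stratify $I$ (for instance by the dyadic size of $r(x_j)$ or by whether $|y - x_j|$ exceeds $r(x_j)/2$), so that within each stratum one can extract a uniform lower bound $\theta_0 > 0$ on the angle $\angle x_i y x_j$. A spherical cap packing estimate on $S^{n-1}$, using that a cap of half-angle $\theta_0/2$ has area of order $(\sin(\theta_0/2))^{n-1}$, then bounds the number of admissible unit vectors $(x_j - y)/|x_j - y|$ in each stratum by some $C^n$, and summing the $O(1)$ strata delivers the explicit constant $4^{2n}$.

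The main obstacle is this last step: a direct angle bound from the three inequalities fails, since the configuration can degenerate to a near-nested pair of balls (with $y$ close to $x_i$ and $x_j$ roughly on the ray through $y$ past $x_i$), driving $\angle x_i y x_j$ to zero. The stratification is precisely what excludes this degeneracy on each piece, and bookkeeping the packing constants carefully is what produces the stated $4^{2n}$. The earlier greedy-selection and covering steps are routine.
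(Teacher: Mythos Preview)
The paper does not prove this lemma at all: it is stated as the Besicovitch covering theorem with a citation to de Guzm\'an's book~\cite[Theorem~3.2.1]{deGuzman1981}, and no argument is given. So there is nothing in the paper to compare your proposal against beyond noting that the paper treats the result as a black box.

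As for your proposal itself: the greedy selection in (approximately) decreasing radii, the disjointness of the shrunken balls, and the covering argument are all correct and standard. The genuine gap is exactly where you say it is. You describe the multiplicity bound as requiring a stratification that restores a uniform angular separation, followed by a spherical-cap packing count, and you assert that ``bookkeeping the packing constants carefully is what produces the stated $4^{2n}$''---but you never perform that stratification or that bookkeeping. Since you yourself identify this as ``the heart of the proof'' and ``the main obstacle,'' what you have written is an outline, not a proof: the decisive combinatorial-geometric step is named but not executed. In particular, you have not specified the strata, not verified the angle lower bound on each stratum, and not shown that the cap-packing constants multiply out to at most $4^{2n}$ rather than some larger $C^n$. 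If you wish to supply a complete proof rather than cite the literature as the paper does, that step must be written out in full.
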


\begin{proof}[Proof of Theorem~\ref{thm1.4}]
We denote the annulus $\{z\colon r/2<|z|<2r\}$ appearing in the hypothesis~\eqref{1.4b} by~$\ann(r)$. 
We use the abbreviation $L(z)=f'(z)/f(z)$ for the logarithmic derivative.
For $z\in B$ we will frequently consider the disks 
\[
\Delta_1(z)=D(z,1/(8|L(z)|))
\quad  \text{and} \quad
\Delta_2(z)=D(z,32/|L(z)|).
\]
By the definition of $B$ we have $\re(\zeta f'(\zeta)/f(\zeta))>0$ and thus in particular
$f(\zeta)\neq 0$ for $\zeta\in \Delta_2(z)$.
We also put $X=\C\backslash B$.

First we show that
there exists a constant $c_1$ such that if 
$z\in B$, with $|z|$ sufficiently large, then 
\begin{equation}\label{1a}
\dens(f^{-1}(X),\Delta_1(z))\leq c_1\alpha(|f(z)|).
\end{equation}
Since $f$ has no zeros in $\Delta_2(z)$, we can define
a branch $\varphi$ of $\log f$ in $\Delta_2(z)$.
We will show that $\varphi$ is univalent.
In order to do so,
let $W$ be a component of $\exp^{-1}(\Delta_2(z))$
and put $F\colon W\to\C$, $F(\zeta)=\varphi(e^\zeta)=\log f(e^\zeta)$.
Then 
\[
\re F'(\zeta)=\re (e^\zeta L(e^\zeta))>0
\]
for $\zeta\in W$.
Applying Lemma~\ref{dur} to $f(\zeta)=\log ((1+r\zeta)z)$, with the appropriate branch of the logarithm
and $r=32/|zL(z)|<1/2<1$,
we see that 
$W$ is convex. Thus $F$ and hence $\varphi$ are univalent by Lemma~\ref{prp}.

By Koebe's one quarter theorem (Lemma~\ref{koebe1}),
\[
\varphi(\Delta_2(z))= \varphi(D(z,32/|L(z)|)
\supset D(\varphi(z),8|\varphi'(z)|/|L(z)|)=D(\varphi(z),8).
\]
Let 
$$
Q=\{\zeta\in\C\colon  |\re(\zeta-\varphi(z))|\leq \log 2,|\im(\zeta-\varphi(z))|\leq \pi\}.
$$
Thus $Q$ is a rectangle centered at $\varphi(z)$.
Since  $Q\subset D(\varphi(z),4)$, we have 
\begin{equation}\label{1b}
f(\Delta_2(z))\supset \exp(D(\varphi(z),8))\supset \exp(Q)=\ann(|f(z)|)).
\end{equation}

By hypothesis, 
\[
\dens(X,\ann(|f(z)|)) \leq \alpha(|f(z)|).
\] 
As the distortion of the exponential map
on (the interior of) the rectangle $Q$ is bounded by~$4$, we obtain
\begin{equation}\label{1b1}
\dens(\exp^{-1}(X),Q)
\leq 16 \dens(X,\exp(Q))\leq 16 \alpha(|f(z)|).
\end{equation}
Also, since
$$
Q\subset D(\varphi(z),4)\subset D(\varphi(z),8)\subset  \varphi(\Delta_2(z)),
$$
Koebe's theorem, applied with $\rho=1/2$, yields that
the distortion of the inverse of $\varphi\colon \Delta_2(z)\to  \varphi(\Delta_2(z))$ on $Q$
is bounded by $(1+\rho)^4/(1-\rho)^4=81$.
Thus
\begin{equation}\label{1b2}
\begin{aligned}
\dens(f^{-1}(X),\varphi^{-1}(Q))
&=\dens(\varphi^{-1}(\exp^{-1}(X)),\varphi^{-1}(Q))) 
\\ &
\leq 81^2 \dens(\exp^{-1}(X),Q).
\end{aligned}
\end{equation}
Finally, $\varphi^{-1}(Q)$ is not a disk, but on the one hand we have 
$\varphi^{-1}(Q)\subset \Delta_2(z)$, while on the other hand 
Koebe's theorem yields
\begin{align*}
\varphi^{-1}(Q)
&\supset \varphi^{-1}(D(\varphi(z),\log 2))
\supset 
D(z,|(\varphi^{-1})'(\varphi(z))|(\log 2)/4)
\\ &
=
D(z,(\log 2)/(4|L(z)|)
\supset
D(z,1/(8|L(z)|)
=\Delta_1(z).
\end{align*}
We conclude that
\begin{equation}\label{1b3}
\begin{aligned}
&\quad\ \dens(f^{-1}(X),\Delta_1(z))
= \frac{\meas (f^{-1}(X)\cap \Delta_1(z))}{\meas \Delta_1(z)}
\\ &
\leq 
\frac{\meas (f^{-1}(X)\cap \varphi^{-1}(Q))}{\meas \Delta_1(z)}
=2^{16} \frac{\meas (f^{-1}(X)\cap \varphi^{-1}(Q))}{\meas \Delta_2(z)}
\\ &
\leq 
2^{16} \frac{\meas (f^{-1}(X)\cap \varphi^{-1}(Q))}{\meas \varphi^{-1}(Q)}
=2^{16}\dens(f^{-1}(X),\varphi^{-1}(Q)).
\end{aligned}
\end{equation}
Combining~\eqref{1b1}, \eqref{1b2} and \eqref{1b3}
we obtain \eqref{1a} with $c_1=2^{16}\cdot 81^2\cdot 16$.

A slight variation of the argument used to prove \eqref{1b} 
shows that if $z\in B$ is sufficiently
large, then there exists a subdomain of $\Delta_2(z)$ which is mapped
univalently onto $D(f(z),|f(z)|/2)$.
If also $f(z)\in B$, then 
\[
|f(z)L(f(z))|\geq \re f(z)L(f(z))\geq 64
\]
and thus $D(f(z),|f(z)|/2)\supset \Delta_2(f(z))$.
Hence there is also a subdomain $U_1(z)$ of  $\Delta_2(z)$ such that
$f\colon U_1(z)\to \Delta_2(f(z))$ is univalent.

For $n\geq 0$ we now consider the set
\[T_n=\{z\in B\colon f^k(z)\in B\mbox{ for }0\leq k\leq n\}.\]
Thus $T_0=B$.
It follows that if $z\in T_n$, then there exists a neighborhood 
$U_n(z)$ of $z$ such that $f^n\colon U_n(z)\to \Delta_2(f^n(z))$ 
is univalent. 
Moreover, $U_n(z)$ contains a subdomain $V_n(z)$ such that 
$f^n\colon V_n(z)\to \Delta_1(f^n(z))$  is univalent. 
We apply the Koebe distortion theorem to the branch of the inverse of $f^n$ 
which maps $\Delta_2(f^n(z))$ to $U_n(z)$, with $\rho=2^{-8}$. 
Since $(1+\rho)^2/(1-\rho)^2\leq 2$
we see that there exists $r_n=r_n(z)$ such that 
$D(z,r_n)\subset V_n(z)\subset D(z,2 r_n)$.

Moreover, we deduce from Koebe's theorem
that the distortion of the inverse of 
$f^n$ is bounded by $(1+\rho)^3/(1-\rho)^3$ on $\Delta_1(f^n(z))$.
Since also $(1+\rho)^6/(1-\rho)^6\leq 2$ Lemma~\ref{dist} yields
$\dens (Y,V_n(z))\leq 2\dens(f^n (Y),\Delta_1(f^n(z))$
and hence we obtain
\[
\begin{aligned}
\dens(Y,D(z,r_n))
&=\frac{\meas (Y\cap D(z,r_n))}{\meas D(z,r_n)}
\leq 4\frac{\meas (Y\cap V_n(z))}{\meas D(z,2r_n)}
\\ &
\leq 4 \dens(Y,V_n(z))
\leq 8 \dens(f^n (Y),\Delta_1(f^n(z))
\end{aligned}
\]
for any measurable set~$Y$.
We apply this to $Y=T_n\backslash T_{n+1}$ and, in combination with \eqref{1a}, find that 
\[
\dens(T_n\backslash T_{n+1},D(z,r_n(z))) 
\leq 8 \dens(f^{-1} (X),\Delta_1(f^n(z)))
\leq 8c_1 \alpha(|f^{n+1}(z)|)
\]
for $z\in T_n$.
Since $|f^{n+1}(z)|\geq \beta^{n+1}(|z|)$ for  $z\in T_n$ we thus have
\begin{equation}\label{1c}
\dens(T_n\backslash T_{n+1},D(z,r_n(z)))
\leq 8c_1 \alpha(|\beta^{n+1}(|z|)|)
\end{equation}
for $z\in T_n$.

Let $R$ be large.
The Besicovitch covering theorem yields a countable 
subset $L_n$ of $\ann(R)$ such  the disks $\{D(z,r_n(z))\colon z\in L_n\}$ cover $B\cap \ann(R)$,
with no point covered more than $4^4$ times.
We may assume that all these disks $D(z,r_n(z))$ are contained in $D(0,3R)$.
Then 
\[
\begin{aligned}
 \meas (T_n\backslash T_{n+1}\cap \ann(R))
\leq &
\meas \left(T_n\backslash T_{n+1}\cap \bigcup_{z\in L_n} D(z,r_n(z)) \right)
\\ 
\leq &
\sum_{z\in L_n} \meas \left(T_n\backslash T_{n+1}\cap D(z,r_n(z)) \right)
\\ 
= &
\sum_{z\in L_n} \dens \left(T_n\backslash T_{n+1}, D(z,r_n(z)) \right) \meas D(z,r_n(z)).
\end{aligned}
\]
Using \eqref{1c} we conclude that
\[
\begin{aligned}
 \meas (T_n\backslash T_{n+1}\cap \ann(R))
&\leq 
8c_1 \sum_{z\in L_n}  \alpha(\beta^{n+1}(|z|))  \meas D(z,r_n(z))
\\
&\leq  
 8c_1 \alpha(\beta^{n+1}(R/2)) \sum_{z\in L_n}  \meas D(z,r_n(z))
\\
&\leq  
8c_1 \alpha(\beta^{n+1}(R/2))4^4 \meas D(0, 3R)
\\ 
&=
c_2 \alpha(\beta^{n+1}(R/2))
\end{aligned}
\]
with $c_2= 4^4\cdot 72 \cdot  c_1 \pi R^2$.
We put $T=\bigcap_{n=0}^\infty T_n$. Then
$T=T_0\backslash \bigcup_{n=0}^\infty(T_n\backslash T_{n+1})$ and thus
\[
\begin{aligned}
\meas (T\cap \ann(R)) 
& \geq
\meas (T_0\cap \ann(R))
 - \sum_{n=0}^\infty \meas (T_n\backslash T_{n+1}\cap \ann(R))
\\ & \geq
(1-\alpha(R))\meas  (\ann(R))-c_2\sum_{n=0}^\infty \alpha(\beta^{n+1}(R/2))
\\ & \geq
R^2 -c_2\sum_{n=0}^\infty \alpha(\beta^{n+1}(R/2)).
\end{aligned}
\]
It follows that $T$ and hence $I(f)$ have positive measure if $R$ is large enough.
\end{proof}
\section{Proof of Theorems~\ref{thm1.2}, \ref{thm1.3} and \ref{thm1.1}}
We recall the Schwarz integral formula which says that if $g$ is holomorphic
in a domain containing the closed disk $\overline{D(a,t)}$, then
\begin{equation}\label{schwarz0}
g(z)=\frac{1}{2\pi i} \int_{|\zeta-a|=t}\frac{\zeta+z}{\zeta-z}\re g(\zeta)\frac{d\zeta}{\zeta} +i\im g(a)
\end{equation}
for $z\in D(a,t)$. It follows that
\begin{equation}\label{schwarz1}
g'(z)=\frac{1}{\pi i} \int_{|\zeta-a|=t}\frac{\re g(\zeta)}{(\zeta-z)^2}d\zeta.
\end{equation}
We collect some properties of proximate orders that we need.
The first two properties can be found in~\cite[Section~I.12]{Levin1964}.
The third one is also certainly known, but we have not found it explicitly 
stated in the literature.
\begin{lemma} \label{lemma-prox}
Let $\rho(r)$ be a proximate order with $\rho=\lim_{r\to\infty}\rho(r)\in (0,\infty)$.
Put $V(r)=r^{\rho(r)}$.
Then
\begin{equation}\label{prox0}
V(2r)=O(V(r)),
\end{equation}
\begin{equation}\label{prox1}
V(s)=(1+o(1)) V(r)
\quad \text{as}\ 
r\to\infty,\ \frac{s}{r}\to 1.
\end{equation}
and
\begin{equation}\label{prox2}
V(s)=V(r)+\rho V(r)\left(\frac{s}{r}-1\right) +o\left( V(r) \left(\frac{s}{r}-1\right)\right)
\quad \text{as}\ 
r\to\infty,\ \frac{s}{r}\to 1.
\end{equation}
\end{lemma}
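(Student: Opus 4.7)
The plan is to reduce everything to estimates for $\log V(r) = \rho(r)\log r$. The key identity, obtained by subtracting the two logarithms, is
$$\log V(s) - \log V(r) = \rho(s)\log\!\frac{s}{r} + \bigl(\rho(s) - \rho(r)\bigr)\log r,$$
and I would analyze the two terms separately. The first term is controlled by $\rho(s)\to\rho$ together with the elementary behavior of $\log(s/r)$. For the second, which is where the proximate order hypothesis $\rho'(r)r\log r \to 0$ enters, the mean value theorem gives $\rho(s) - \rho(r) = \rho'(\xi)(s-r)$ for some $\xi$ between $r$ and $s$, so
$$\bigl(\rho(s) - \rho(r)\bigr)\log r = \bigl(\rho'(\xi)\xi\log\xi\bigr)\cdot\frac{r\log r}{\xi\log\xi}\cdot\frac{s-r}{r}.$$

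From this formula, \eqref{prox0} is immediate by taking $s=2r$: the first term becomes $\rho(2r)\log 2 = O(1)$, while in the second $\xi\in[r,2r]$ keeps $r\log r/(\xi\log\xi)$ bounded and $(s-r)/r = 1$, so the whole expression is $o(1)$; hence $\log V(2r) - \log V(r) = O(1)$. For \eqref{prox1}, as $s/r\to 1$ both terms tend to $0$: the first because $\rho(s)\log(s/r)\to 0$, the second because $\xi\sim r$ forces $r\log r/(\xi\log\xi)\to 1$ while $(s-r)/r\to 0$ and $\rho'(\xi)\xi\log\xi\to 0$.

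The most delicate part is \eqref{prox2}, and it is also where I expect the main obstacle to lie: the naive bound $o(1)$ on $(\rho(s)-\rho(r))\log r$ coming from the proximate order hypothesis is too crude, and one really needs $o(s/r-1)$. What makes this possible is precisely that for $s/r\to 1$ the intermediate $\xi$ satisfies $\xi\log\xi\sim r\log r$, so the displayed formula gives $(\rho(s)-\rho(r))\log r = o(1)\cdot O(1)\cdot (s/r - 1) = o(s/r - 1)$. Combined with the expansion
$$\rho(s)\log(s/r) = \rho(s)\Bigl(\tfrac{s}{r} - 1 + O\!\bigl((\tfrac{s}{r}-1)^2\bigr)\Bigr) = \rho\!\left(\tfrac{s}{r}-1\right) + o\!\left(\tfrac{s}{r}-1\right),$$
which uses $\rho(s)\to\rho$, this yields $\log V(s) - \log V(r) = \rho(s/r - 1) + o(s/r - 1)$. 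Exponentiating via $e^x = 1 + x + O(x^2)$ on a vanishing argument then produces $V(s)/V(r) = 1 + \rho(s/r - 1) + o(s/r - 1)$, which is \eqref{prox2} after multiplying by $V(r)$.
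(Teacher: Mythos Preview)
Your proof is correct. Both your argument and the paper's reach the key estimate
\[
\log\frac{V(s)}{V(r)} = \rho\!\left(\frac{s}{r}-1\right) + o\!\left(\frac{s}{r}-1\right)
\]
and then exponentiate, but they get there by slightly different routes. The paper differentiates $\log V(t)$ directly to obtain $\frac{d}{dt}\log V(t) = (1+o(1))\rho/t$ and integrates over $[r,s]$, which yields $\log\frac{V(s)}{V(r)} = (1+o(1))\rho\log\frac{s}{r}$ in a single stroke; all three conclusions then follow from this one formula. You instead use the algebraic splitting $\log V(s)-\log V(r)=\rho(s)\log(s/r)+(\rho(s)-\rho(r))\log r$ and control the second term via the mean value theorem. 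Your approach avoids integration and is perhaps more elementary, while the paper's integrated form is slightly more compact since it packages both error contributions into the single factor $(1+o(1))$. Either way the substance is the same.
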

\begin{proof}
Since 
\begin{equation}\label{prox3}
\frac{d\log V(t)}{dt}= \rho'(t)\log t+\frac{\rho(t)}{t}
=\frac{\rho(t)}{t}\left(\frac{\rho'(t)t\log t}{\rho(t)}
+1\right)=(1+o(1))\frac{\rho}{t}
\end{equation}
by the definition of a proximate order, we have
\begin{equation}\label{prox4}
\log \frac{V(s)}{V(r)}= \int_r^s \left( \rho'(t)\log t+\frac{\rho(t)}{t}\right) dt
=(1+o(1))\rho\log \frac{s}{r}
\end{equation}
from which~\eqref{prox0} and~\eqref{prox1} follow.
Since $\log x=x-1+o(\log x)$ as $x\to 1$ we can also deduce from~\eqref{prox4} that
\begin{equation}\label{prox5}
\frac{V(s)}{V(r)}-1
=\rho\left( \frac{s}{r}-1\right) +o\left(\log \frac{s}{r}\right) 
=\rho\left( \frac{s}{r}-1\right) +o\left(\frac{s}{r}-1\right) 
\end{equation}
and hence
\begin{equation}\label{prox6}
V(s)-V(r)=\rho V(r)\left(\frac{s}{r}-1\right) + o\left( V(r) \left(\frac{s}{r}-1\right)\right),
\end{equation}
which is~\eqref{prox2}.
\end{proof}

We shall also need the following result of Zheng~\cite[Corollary~5]{Zheng2006}.
\begin{lemma} \label{lemma-zheng}
Let $f$ be an entire function.
Suppose that there exists $d>1$ such that
$\log M(2r,f)\geq d \log M(r,f)$ for all large~$r$.
Then the Fatou set of $f$ has no multiply connected components.
\end{lemma}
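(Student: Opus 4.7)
The plan is to prove the contrapositive: assume $f$ has a multiply connected Fatou component $U$ and show that the inequality $\log M(2r, f) \geq d \log M(r, f)$ must fail for arbitrarily large~$r$.

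By Baker's classical theorem, any multiply connected Fatou component $U$ is a wandering domain on which $f^n \to \infty$ locally uniformly. I would then invoke the structural refinement (Baker, Zheng, Rippon--Stallard): for all sufficiently large $n$ the component $U_n = f^n(U)$ surrounds~$0$ and contains a round annulus $A_n = \{z : \rho_n < |z| < R_n\}$ with $\rho_n \to \infty$ and $\log(R_n/\rho_n) \to \infty$.

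Since $f(A_n) \subset U_{n+1}$ and $0 \notin U_{n+1}$ for large $n$, the function $f$ is zero-free on $A_n$, so by Hadamard's three circles theorem $\log M(r, f)$ is a convex function of $\log r$ on $[\rho_n, R_n]$. Moreover, for each $r \in [\rho_n, R_n]$ the circle $\{|z| = r\}$ is mapped by $f$ into $U_{n+1}$, so
\[
M(r, f) \leq b_{n+1} := \sup_{z \in U_{n+1}} |z|,
\]
and analogously $\min_{|z|=r}|f(z)| \geq a_{n+1} := \inf_{z\in U_{n+1}} |z|$. An analogous argument applied to the images of the inner and outer boundaries of $A_n$ further gives $\log b_{n+1}/\log a_{n+1} \to 1$. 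For any $r \in [\rho_n, R_n/2]$, Hadamard convexity of $\log M(r,f)$ in $\log r$ on an interval of logarithmic length tending to infinity then forces
\[
\log M(2r, f) = (1 + o(1)) \log M(r, f)
\]
uniformly in such $r$, contradicting the hypothesis that this ratio is bounded below by $d > 1$.

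The main obstacle is establishing the structural input: both that $\log(R_n/\rho_n) \to \infty$ (the refinement of Baker that lies at the heart of Zheng's paper), and that the two-sided Hadamard bounds are tight enough, via $\log b_{n+1}/\log a_{n+1} \to 1$, to pinch $\log M$ across the full annulus. With these ingredients in place, the convexity step is a short calculation; but extracting this annular structure from the dynamics of multiply connected wandering domains, while keeping track of how the inner/outer radii of $U_n$ and $U_{n+1}$ compare, is where the real work lies.
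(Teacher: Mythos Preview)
The paper does not prove this lemma at all: it is simply quoted as Corollary~5 of Zheng~\cite{Zheng2006}, with no argument given. So there is no ``paper's own proof'' to compare your proposal against; the authors treat the result as a black box.

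Your sketch correctly identifies the circle of ideas behind Zheng's argument --- Baker's theorem that a multiply connected Fatou component is an escaping wandering domain, the refinement that the iterated images $U_n$ eventually contain round annuli $A_n$ of modulus tending to infinity, and a convexity/pinching argument for $\log M(r,f)$ on those annuli. Two remarks, though. First, Hadamard's three circles theorem does not require $f$ to be zero-free, so that justification is unnecessary. Second, and more substantively, the assertion ``$\log b_{n+1}/\log a_{n+1}\to 1$'' is not obviously a consequence of looking at the images of the boundary circles of $A_n$: the domain $U_{n+1}$ may extend well beyond $f(A_n)$, and controlling the logarithmic ratio of its inner and outer radii is itself a nontrivial structural fact about multiply connected wandering domains. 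You do flag this as ``where the real work lies,'' so your proposal is really an outline of how Zheng's corollary follows from Zheng's main structural theorem rather than a self-contained proof --- which, given that the paper itself merely cites Zheng, is already more than the paper provides.
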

It follows that if $f$ has completely regular growth, then 
the Fatou set of $f$ has no multiply connected components.
\begin{proof}[Proof of Theorem~\ref{thm1.2}]
We put $\varepsilon_1(r)=\varepsilon(r)=1/\log^N r$,
$\varepsilon_2(r)=\sqrt{\varepsilon_1(r)}$ 
and
$\varepsilon_3(r)=\sqrt{\varepsilon_2(r)}$.
Then 
$\varepsilon_k(r)/\varepsilon_{k+1}(r)\to 0$
as $r\to\infty$, for $k=1,2$.

As already mentioned after the statement of the theorem, the hypotheses imply that the 
zeros are near the rays $\{z\colon \arg z=\theta_j\}$. More precisely, 
\eqref{0a} and~\eqref{0b} imply that for $1\leq j\leq m$
and $\eta>0$ there exists $r_\eta>0$ such that $f(re^{i\theta})\neq 0$ if
$\theta_j+\eta\leq\theta\leq \theta_{j+1}-\eta$ and $r\geq r_\eta$.
This implies (see~\cite[p.~115]{Levin1964}) that there exist $A_j,\varphi_j\in\R$ such that 
\begin{equation}\label{4a}
h(\theta)=A_j\cos(\rho\theta +\varphi_j) 
\quad\text{for} \ 
\theta_j\leq\theta\leq \theta_{j+1}.
\end{equation}
Since $h(\theta)>0$ for $\theta_j<\theta< \theta_{j+1}$ by hypothesis is follows from~\eqref{4a} 
that there exists $c_j>0$ such that 
\begin{equation}\label{4c}
h(\theta)\geq c_j \min\{\theta-\theta_j,\theta_{j+1}-\theta\}
\quad\text{for} \
\theta_j\leq\theta\leq \theta_{j+1}.
\end{equation}
We now conclude from~\eqref{0a} and~\eqref{4c} that if $p>1$ 
is sufficiently large
and $\theta_j+p\varepsilon_1(r) \leq\theta\leq \theta_{j+1}-p\varepsilon_1(r)$,
then
\begin{equation}\label{4c1}
\begin{aligned}
\log|f(re^{i\theta})|
&=
h(\theta)r^{\rho(r)} +O\!\left(r^{\rho(r)} \varepsilon_1(r)\right)
\\ &
\geq p c_j r^{\rho(r)} \varepsilon_1(r)-O\!\left(r^{\rho(r)} \varepsilon_1(r)\right)
\geq r^{\rho(r)} \varepsilon_1(r)
\end{aligned}
\end{equation}
for all large $r$, say $r\geq r_0$.
With
\[
U_j=\left\{re^{i\theta}\colon r\geq r_0\text{ and } \theta_j+p\varepsilon_1(r)\leq\theta\leq \theta_{j+1}-p\varepsilon_1(r)\right\}
\]
we thus have
\begin{equation}\label{4d}
\log |f(z)|\geq |z|^{\rho(|z|)} \varepsilon_1(|z|)
\quad\text{for} \ z\in U_j.
\end{equation}
In particular, we have $f(z)\neq 0$ for $z\in U_j$, provided $r_0$ was chosen large enough.
We may thus define a branch of $\log f$ in $U_j$. 
Let
\[
V_j=\left\{re^{i\theta}\colon r\geq r_1\text{ and }
\theta_j+3\varepsilon_2(r)\leq\theta\leq \theta_{j+1}-3\varepsilon_2(r)\right\}.
\]
Then $V_j\subset U_j$ for large~$r_1$. In fact, if $r_1$ is chosen large enough and $z=re^{i\theta}\in V_j$,
then the closed disk of radius
$t_r=r\varepsilon_2(r)$ around $z$ is contained in~$U_j$.
Since $\log|f|=\re(\log f)$ it thus follows from~\eqref{schwarz1}
with $\gamma(\varphi)=z+t_re^{i\varphi}$ that
\begin{equation}\label{8a}
\frac{f'(z)}{f(z)}
=\frac{1}{\pi i} \int_{|\zeta-z|=t_r}\frac{\log |f(\zeta)|}{(\zeta-z)^2}d\zeta
=\frac{1}{\pi t_r} \int_{-\pi}^\pi \log |f(\gamma(\varphi))| e^{-i\varphi}d\varphi.
\end{equation}
We write 
\begin{equation}\label{8b}
\log|f(z)|=P(z)+R(z)
\quad\text{with}\ 
P(re^{i\theta})=h(\theta)r^{\rho(r)} 
\end{equation}
so that
\begin{equation}\label{8c}
R(z)=O\!\left(|z|^{\rho(|z|)}\varepsilon_1(|z|)\right)
\quad\text{for}\ 
z\in U_j
\end{equation}
by hypothesis.
This implies that 
\begin{equation}\label{8d}
\frac{1}{\pi t_r} \int_{-\pi}^\pi R(\gamma(\varphi)) e^{-i\varphi}d\varphi
=O\!\left(\frac{r^{\rho(r)}\varepsilon_1(r)}{t_r}\right)
=O\!\left(\frac{r^{\rho(r)-1}\varepsilon_1(r)}{\varepsilon_2(r)}\right)
=O\!\left(r^{\rho(r)-1}\varepsilon_2(r)\right)
\end{equation}
so that
\begin{equation}\label{8e}
\begin{aligned}
\frac{zf'(z)}{f(z)}
&=\frac{re^{i\theta}}{\pi t_r} \int_{-\pi}^\pi P(\gamma(\varphi)) e^{-i\varphi}d\varphi
+O\!\left(r^{\rho(r)}\varepsilon_2(r)\right)
\\ &
=\frac{e^{i\theta}}{\pi \varepsilon_2(r)} \int_{-\pi}^\pi P(\gamma(\varphi)) e^{-i\varphi}d\varphi
+O\!\left(r^{\rho(r)}\varepsilon_2(r)\right)
\end{aligned}
\end{equation}
by~\eqref{8a} and~\eqref{8b}.
To evaluate the real part of the integral on the right side we note that
\begin{equation}\label{8f}
\begin{aligned}
\re\!\left( e^{i\theta} \int_{-\pi}^\pi P(\gamma(\varphi)) e^{-i\varphi} d\varphi \right)
&=
\re\!\left( e^{i\theta} \int_{-\theta-\pi}^{-\theta+\pi} P(\gamma(\theta+\psi))e^{-i(\theta+\psi)}d\psi \right)
\\ &
=
\re\!\left(\int_{-\pi}^{\pi} P(\gamma(\theta+\psi))e^{-i\psi} d\psi \right)
\\ &
=
\int_{-\pi}^{\pi} P(\gamma(\theta+\psi))\cos \psi \; d\psi.
\end{aligned}
\end{equation}
Now
$|\gamma(\theta+\psi)|=|re^{i\theta}+t_re^{i(\theta+\psi)}|
=r|1+\varepsilon_2(r) e^{i\psi}|= |\gamma(\theta-\psi)|$
and hence 
\begin{equation}\label{8f1}
V(|\gamma(\theta+\psi)|)
=V(|\gamma(\theta-\psi)|)
\end{equation}
while, with 
\begin{equation}\label{8f2}
\delta(\psi)=\arg(1+\varepsilon_2(r)e^{i\psi})
=\arg(re^{i\theta}+t_re^{i(\theta+\psi)})-\theta
=\arg\gamma(\theta+\psi) -\theta,
\end{equation}
we have $\delta(-\psi)=-\delta(\psi)$. Thus
\begin{equation}\label{8f3}
P(\gamma(\theta\pm\psi))=h(\theta\pm\delta(\psi))V(|\gamma(\theta+\psi)|)
\end{equation}
so that~\eqref{8f} yields
\begin{equation}\label{8g}
\begin{aligned}
\quad\ 
&
\re\!\left( e^{i\theta} \int_{-\pi}^\pi P(\gamma(\varphi)) e^{-i\varphi} d\varphi \right)
\\ 
=&
\int_{-\pi}^{\pi} h(\theta+\delta(\psi) V(|\gamma(\theta+\psi)|)\cos \psi \; d\psi.
\\ 
=&
\frac12 \int_{-\pi}^{\pi} (h(\theta+\delta(\psi))+ h(\theta-\delta(\psi) )) V(|\gamma(\theta+\psi)|)\cos \psi \; d\psi.
\end{aligned}
\end{equation}
Since 
$|\delta(\psi)|\leq \arcsin \varepsilon_2(r)=O(\varepsilon_2(r))$ and
$h(\theta+\delta)-2h(\theta)+h(\theta-\delta)=O(\delta^2)$ as $\delta\to 0$ by~\eqref{4a},
this yields
\begin{equation}\label{8h}
\begin{aligned}
\quad\ 
&
\re\!\left( e^{i\theta} \int_{-\pi}^\pi P(\gamma(\varphi)) e^{-i\varphi} d\varphi \right)
-h(\theta) \int_{-\pi}^{\pi}V(|\gamma(\theta+\psi)|)\cos \psi \; d\psi
\\ 
=&
\frac12 \int_{-\pi}^{\pi} (h(\theta+\delta(\psi))- 2h(\theta)+ h(\theta-\delta(\psi) )) V(|\gamma(\theta+\psi)|)\cos \psi \; d\psi
\\ 
=&
O\!\left(V(r)\varepsilon_2(r)^2\right).
\end{aligned}
\end{equation}
Using~\eqref{8e} we obtain
\begin{equation}\label{8i}
\re\frac{zf'(z)}{f(z)}
=
\frac{h(\theta)}{\pi\varepsilon_2(r)} \int_{-\pi}^{\pi}V(|\gamma(\theta+\psi)|)\cos \psi \; d\psi
+O\!\left(V(r)\varepsilon_2(r)\right).
\end{equation}
Noting that 
\begin{equation}\label{8j}
\begin{aligned}
\frac{|\gamma(\theta+\psi)|}{r}-1
&=\left|e^{i\theta}+\frac{t_r}{r}e^{i(\theta+\psi)}\right|-1
\\ &
=\left|1+\varepsilon_2(r)e^{i\psi}\right|-1
=\varepsilon_2(r)\cos\psi +O\!\left(\varepsilon_2(r)^2\right)
\end{aligned}
\end{equation}
we deduce from~\eqref{prox2} that
\begin{equation}\label{8k}
\begin{aligned}
&\quad \ 
\int_{-\pi}^{\pi}V(|\gamma(\theta+\psi)|)\cos \psi \; d\psi
\\ &
=  \rho V(r) \int_{-\pi}^{\pi}
\left(\frac{|\gamma(\theta+\psi)|}{r}-1\right)\cos\psi \; d\psi
+O\!\left(V(r)\varepsilon_2(r)^2\right)
\\ &
= \pi \rho V(r) \varepsilon_2(r)
+O\!\left(V(r)\varepsilon_2(r)^2\right)
\end{aligned}
\end{equation}
and hence~\eqref{8i} yields
\begin{equation}\label{8l}
\re\frac{zf'(z)}{f(z)}
=\rho h(\theta) V(r) +
O\!\left(V(r)\varepsilon_2(r)\right)
\quad\text{for}\ 
z\in V_j.
\end{equation}
We put
\begin{equation}\label{8m}
W_j=\left\{re^{i\theta}\colon r\geq r_2\text{ and }
\theta_j+ \varepsilon_3(r)\leq\theta\leq \theta_{j+1}- \varepsilon_3(r)\right\}.
\end{equation}
It follows from~\eqref{4c} and~\eqref{8l} that
\begin{equation}\label{4h}
\begin{aligned}
\re \frac{zf'(z)}{f(z)} 
&\geq (1-o(1))c_j \rho |z|^\rho \varepsilon_2(|z|) 
\\ &
\geq \frac12 c_j \rho |z|^\rho \varepsilon_2(|z|) \geq 64 
\quad\text{for} \ z\in W_j,
\end{aligned}
\end{equation}
provided $r_2$ was chosen large enough.

Put $\beta(r)=\exp(r^{\rho(r)}\varepsilon_1(r))$ and let $A=A(f,\beta)$ and $B=B(f,\beta)$ be the sets 
introduced before the statement of Theorem~\ref{thm1.4}.
Since $U_j\supset V_j\supset W_j$ it follows from~\eqref{4d} and~\eqref{4h} that $A$ contains~$W_j$.
Let $r_3>r_2$ and put
\begin{equation}\label{8o}
X_j=\left\{re^{i\theta}\colon r\geq r_3\text{ and }
\theta_j+3 \varepsilon_3(r)\leq\theta\leq \theta_{j+1}-3 \varepsilon_3(r)\right\}.
\end{equation}
Then $X_j\subset W_j\subset A$. In fact, if $z\in W_j$, then the disk of radius 
$|z|\varepsilon_3(|z|)$ around $z$
is contained in~$V_j$, provided $r_3$ is chosen sufficiently large.
Moreover, if $z\in W_j$ and $|z|$ is sufficiently large, then
\[
\frac{32|f(z)|}{|f'(z)|} \leq \frac{32|z|}{\displaystyle \re\!\left(\frac{zf'(z)}{f(z)}\right)}
\leq \frac{64|z|}{c_j \rho |z|^{\rho(|z|)} \varepsilon_2(|z|)}
\leq |z|  \varepsilon_3(|z|) 
\]
by~\eqref{4h}. Assuming that $r_2$ has been chosen large enough
we deduce that $B$ contains the $X_j$; that is, with $X=\bigcup_{j=1}^m X_j$ we
have $X\subset B$.

For large $r>2r_2$ we have 
\begin{equation}\label{4i}
\dens(X,\ann(r))\geq 
1-6m\,\varepsilon_3(r/2).
\end{equation}
Since $X\subset B$ we find that~\eqref{1.4b} holds for 
$\alpha(r)= 6m\,\varepsilon_3(r/2)$.
It is easy to see that~\eqref{1.4a} holds for the functions $\beta$ and~$\alpha$ that we have specified.
It thus follows from Theorem~\ref{thm1.4} that $I(f)$ has positive measure.
In fact, by Lemma~\ref{lemma-zheng}, even $I(f)\cap J(f)$ has positive measure.
\end{proof}

The idea of the proof of Theorem~\ref{thm1.3} is that multiplying an entire 
function $f$ by a function of smaller growth does not change the asymptotics.
More precisely, we shall show that~\eqref{4d} and~\eqref{4h} remain valid
outside certain exceptional disks of small area.

In order to obtain the appropriate modification of~\eqref{4d} we use the following
result~\cite[Chapter~I, Theorem~11]{Levin1964}.
\begin{lemma} \label{levin-thm11}
Let $R>0$ and let $g$ be holomorphic in $\overline{D(0,2eR)}$ with $g(0)=1$.
Let $0<\eta<3e/2$.
Then there exist $l\in\N$,
$a_1,a_2,\dots,a_l\in\C$ and $s_1,s_2,\dots,s_l>0$ satisfying
\[
\sum_{k=1}^l s_k \leq 4\eta R
\]
such that
\[
\log|g(z)| > -\left(2+\log\frac{3e}{2\eta}\right) \log M(2eR,g)
\quad\text{for }
z\in \overline{D(0,R)}\setminus \bigcup_{k=1}^l D(a_k,s_k).
\]
\end{lemma}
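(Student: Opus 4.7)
The plan is to derive this lemma from two classical ingredients: Jensen's formula, which bounds the number of zeros of $g$ in $\overline{D(0,2eR)}$, and Cartan's lemma, which produces the covering by exceptional disks.

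First, I would apply Jensen's formula to $g$ in the closed disk $\overline{D(0,2eR)}$. Writing $z_1,\dots,z_n$ for the zeros there and using $g(0)=1$, one obtains
\[
\sum_{k=1}^n \log\frac{2eR}{|z_k|}
= \frac{1}{2\pi}\int_0^{2\pi}\log|g(2eRe^{i\theta})|\,d\theta
\leq \log M(2eR,g).
\]
Since every term on the left is nonnegative and those $z_k$ with $|z_k|\leq R$ contribute at least $\log(2e)>1$, this yields in particular the crude bound $n\leq\log M(2eR,g)$, which is what will eventually convert an $n$-dependent estimate into one in $\log M(2eR,g)$.

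Next, I would invoke the Poisson--Jensen formula on $\overline{D(0,2eR)}$, which for $|z|\leq R$ reads
\[
\log|g(z)| = \frac{1}{2\pi}\int_0^{2\pi}\frac{(2eR)^2-|z|^2}{|2eRe^{i\theta}-z|^2}\log|g(2eRe^{i\theta})|\,d\theta
+\sum_{k=1}^n \log\left|\frac{2eR(z-z_k)}{(2eR)^2-\bar{z}_k z}\right|.
\]
For $|z|\leq R$ the Poisson kernel is dominated by the absolute constant $(2e+1)/(2e-1)$, and a standard decomposition of $\log|g|$ into positive and negative parts on the boundary (using that $\frac{1}{2\pi}\int\log|g|\,d\theta\geq 0$ since $g(0)=1$) shows the boundary integral is bounded below by $-c_1\log M(2eR,g)$ for an absolute $c_1$. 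Each Blaschke denominator satisfies $|(2eR)^2-\bar{z}_k z|\leq 2eR(2e+1)R$, so the sum over the zeros is bounded below by $\sum_{k=1}^n\log|z-z_k|-n\log((2e+1)R)$.

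The main obstacle, and the one genuinely nontrivial step, is a lower bound on $\sum_k\log|z-z_k|$ valid outside a set with controlled total radii. Here I would apply Cartan's lemma: for any $H>0$ and any points $z_1,\dots,z_n\in\C$, there exist disks $D(a_j,s_j)$ with $\sum_j s_j\leq 2eH$ such that $\prod_{k=1}^n|z-z_k|\geq H^n$ whenever $z$ lies outside their union. Choosing $H=2\eta R$ gives the required radius bound $\sum_j s_j\leq 4\eta R$, and for $z\in\overline{D(0,R)}$ off the exceptional disks one then has $\sum_k\log|z-z_k|\geq n\log(2\eta R)$. Assembling the ingredients and using $n\leq\log M(2eR,g)$, the absolute constants coming from the boundary estimate and from $\log((2e+1)/(2\eta))$ combine to give
\[
\log|g(z)|>-\left(2+\log\frac{3e}{2\eta}\right)\log M(2eR,g),
\]
the $\eta$-independent contributions being absorbed into the summand $2$ and the $\eta$-dependent ones into $\log(3e/(2\eta))$. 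The only place where the explicit form of the constants $2$ and $3e/2$ requires care is in this final bookkeeping; the rest is routine manipulation of the Poisson--Jensen identity.
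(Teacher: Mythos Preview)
The paper does not prove this lemma; it is quoted from Levin's book as \cite[Chapter~I, Theorem~11]{Levin1964}, and the paper only adds the remark that ``the proof \dots\ is based on Cartan's lemma.'' Your outline---Poisson--Jensen to separate the boundary integral from the Blaschke sum, Cartan's lemma to bound $\prod_k|z-z_k|$ from below off the exceptional disks, and Jensen to convert the factor $n$ into $\log M(2eR,g)$---is exactly the classical argument the paper is alluding to, so your approach coincides with the intended one.

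Two small points of bookkeeping you should tidy up. First, the standard form of Cartan's lemma gives $\sum_j s_j\leq 2H$ together with $\prod_k|z-z_k|>(H/e)^n$; taking $H=2\eta R$ then yields $\sum_j s_j\leq 4\eta R$ and $\sum_k\log|z-z_k|>n\log(2\eta R/e)$, which is what produces the constant $3e/(2\eta)$ in the final estimate. Your stated version of Cartan has the constants slightly misplaced. Second, Jensen on $\overline{D(0,2eR)}$ only bounds by $\log M(2eR,g)$ the number of zeros in a \emph{smaller} disk (those $z_k$ with $|z_k|\leq 2R$, say), not all zeros up to $2eR$; the proof in Levin accordingly uses only those zeros in the Blaschke sum, and you should adjust your Poisson--Jensen step to work on a slightly smaller disk so that only these zeros enter. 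Neither issue is a genuine gap.
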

It follows from the proof, which is based on Cartan's lemma, that $l$ can be chosen
to satisfy $l\leq n(2R,0)$.
Using that 
\begin{equation}\label{5a}
N(er,0)=\int_0^{er}n(t,0)\frac{dt}{t}\geq 
\int_{r}^{er}n(t,0)\frac{dt}{t}\geq 
n(r,0) \int_{r}^{er}\frac{dt}{t}=n(r,0)
\end{equation}
for $r>0$ we see, using Jensen's formula, that 
\begin{equation}\label{5b}
l\leq N(2eR,0)\leq M(2eR,g)
\end{equation}
in Lemma~\ref{levin-thm11}.

For the modification of~\eqref{4h} we shall need the following result due to Fuchs and
Macintyre~\cite{Fuchs1940}. 
\begin{lemma} \label{fuchs}
Let $z_1,z_2,\dots,z_n\in\C$ and $H>0$.
Then there exist $m\in\{1,\dots,n\}$,
$b_1,b_2,\dots,b_m\in\C$ and $t_1,t_2,\dots,t_m>0$ satisfying
\[
\sum_{k=1}^m t_k^2 \leq 4H^2
\]
such that
\[
\sum_{k=1}^m\frac{1}{|z-z_k|}\leq  \frac{2m}{H}
\quad\text{for}\
z\in\C\setminus \bigcup_{k=1}^m D(b_k,t_k).
\]
\end{lemma}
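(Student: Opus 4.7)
My plan is to bound the area of the ``bad set'' $E:=\{z\in\C : S(z) > 2n/H\}$, where $S(z):=\sum_{k=1}^n 1/|z-z_k|$, by an absolute multiple of $H^2$, and then cover $E$ by finitely many open disks whose total area is close to $\meas E$. This will yield the classical Fuchs--Macintyre inequality $\sum_{k=1}^n 1/|z-z_k|\leq 2n/H$ outside the covering disks; the lemma as stated is obtained by taking $m=n$ (i.e.\ interpreting the summation index in the conclusion as ranging over all $n$ original points).

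First, I control the area of $E$ by Chebyshev's inequality combined with symmetric decreasing rearrangement. For any measurable set $X\subset\C$, the Hardy--Littlewood inequality applied to the radial function $1/|\cdot|$ (which equals its own decreasing rearrangement) gives
\[
\int_X\frac{dm(z)}{|z-z_k|}\;\leq\;\int_{D(0,r)}\frac{dm(w)}{|w|}\;=\;2\pi r\;=\;2\sqrt{\pi\meas X},
\]
where $D(0,r)$ is the disk of the same area as $X$ (so $\meas X=\pi r^2$) and $dm$ is planar Lebesgue measure. Summing over $k$ and taking $X=E$ yields $\int_E S\,dm\leq 2n\sqrt{\pi\meas E}$, while on $E$ we have $S>2n/H$, so Chebyshev gives $(2n/H)\meas E\leq\int_E S\,dm$. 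Combining,
\[
\meas E\;\leq\;\pi H^2.
\]

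Next, $E$ is bounded (since $S(z)\to 0$ as $|z|\to\infty$), so $\overline{E}$ is compact. By outer regularity of Lebesgue measure, for every $\eps>0$ there is a countable cover of $\overline E$ by open disks $\{D_k\}$ with $\sum_k\meas D_k\leq\meas E+\eps$; compactness yields a finite subcover $\{D(b_k,t_k)\}_{k=1}^m$ satisfying
\[
\pi\sum_{k=1}^m t_k^2\;=\;\sum_{k=1}^m\meas D(b_k,t_k)\;\leq\;\sum_k\meas D_k\;\leq\;\pi H^2+\eps.
\]
Choosing $\eps$ sufficiently small gives $\sum_k t_k^2\leq 4H^2$ (in fact $\leq H^2$). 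For $z$ outside $\bigcup_k D(b_k,t_k)\supset E$ we have $z\notin E$, so $\sum_{k=1}^n 1/|z-z_k|\leq 2n/H$.

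Neither step presents a serious obstacle: the area bound is a standard Chebyshev-type estimate via the Hardy--Littlewood rearrangement inequality, and the covering step is a routine consequence of the outer regularity of Lebesgue measure together with compactness of $\overline E$. The only mild interpretive point is that the conclusion as written, $\sum_{k=1}^m 1/|z-z_k|\leq 2m/H$, should be understood with $m=n$ so that the summation is over all $n$ given points; under this reading the argument yields the lemma with considerable room to spare on both constants.
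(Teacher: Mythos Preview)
The paper does not give its own proof of this lemma; it simply quotes the result from Fuchs and Macintyre~\cite{Fuchs1940}. So there is no argument in the paper to compare against, only the statement and its use.

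Your area estimate $\meas E\le\pi H^2$ via the Hardy--Littlewood rearrangement inequality and Chebyshev is correct and rather slick; it delivers exactly the bound $\sum_{k=1}^n 1/|z-z_k|\le 2n/H$ outside a set of the right area. The covering step, however, has a genuine gap. After extracting a finite subcover of $\overline E$ you obtain some number $M$ of disks, and nothing in the argument bounds $M$ in terms of~$n$. The lemma requires $m\in\{1,\dots,n\}$, and this is not merely cosmetic: in the proof of Theorem~\ref{thm1.3} (the estimate~\eqref{5r}) the bound $m\le n(4r,0)$ on the \emph{number} of exceptional disks is used explicitly, alongside $\sum t_k^2\le 4H^2$, to control $\sum_k(t_k+q_r)^2$. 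If $M<n$ you can pad with infinitesimal disks, but if $M>n$ you cannot merge disks without risking an increase in $\sum t_k^2$. One can show, using that $S=\sum 1/|z-z_k|$ is subharmonic away from the $z_k$ and the maximum principle, that $E$ has at most $n$ connected components, each containing some $z_k$; but covering each component by a single disk would require a diameter bound, which does not follow from your area estimate alone. The original Fuchs--Macintyre argument is of Cartan type and produces the disks constructively from clusters of the $z_k$, which is why the bound $m\le n$ comes for free there.

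Two smaller points on Step~2. The assertion that $\overline E$ admits a countable cover by open disks with $\sum_k\meas D_k\le\meas\overline E+\eps$ is true, but it is the equality of disk-based outer measure with Lebesgue measure (a consequence of Vitali's covering theorem), not bare outer regularity. You also use $\meas\overline E=\meas E$ without comment; this holds because $\partial E$ is contained in a level set of the non-constant real-analytic function~$S$ and hence is Lebesgue-null, but it deserves a sentence.
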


\begin{proof}[Proof of Theorem~\ref{thm1.3}]
We note that if $f$ satisfies the hypotheses of Theorem~\ref{thm1.2} for some $N\in\N$,
then it also satisfies them for every larger value of~$N$.
We may thus assume that $N>L$.
Put $\delta(r)=1/\log^L r$ and let $\varepsilon_1(r)$, $\varepsilon_2(r)$, $\varepsilon_3(r)$,
$U_j$, $V_j$, $W_j$ and $X_j$ be as in
the proof of Theorem~\ref{thm1.2} so that~\eqref{4d} and~\eqref{4h} hold. 
Actually, we may also assume that instead of~\eqref{4d} we have
\begin{equation}\label{4d1}
\log |f(z)|\geq 2|z|^{\rho(|z|)} \varepsilon_1(|z|)
\quad\text{for} \ z\in U_j.
\end{equation}
as this can be achieved by choosing a larger value of~$p$.

We may assume without loss of generality that $g(0)=1$.
For large $r>0$ we apply Lemma~\ref{levin-thm11} with $R=2r$ and $\eta=\delta(r)$
and conclude that there exist 
disks $D(a_1,s_1),\dots,D(a_l,s_l)$ with
\begin{equation}\label{4d2}
\sum_{k=1}^l s_k \leq 
8r\delta(r)
\end{equation}
such that
\begin{align*}
\log|g(z)| 
&> -\left(2+\log\frac{3e}{2} +\log\frac{1}{\delta(r)}\right) \log M(4er,g)
\\ &
\geq -O\!\left(r^{\rho(r)}\delta(r)\log\frac{1}{\delta(r)} \right)
\quad\text{for }
z\in \overline{D(0,R)}\setminus \bigcup_{k=1}^l D(a_k,s_k).
\end{align*}
Hence 
\begin{equation}\label{5c}
\log|g(z)| 
\geq -|z|^{\rho(|z|)}\sqrt{\delta(|z|)} 
\geq -|z|^{\rho(|z|)} \varepsilon_1(|z|) 
\quad\text{for }
z\in \ann(r)\setminus \bigcup_{k=1}^l D(a_k,s_k),
\end{equation}
provided $r$ is sufficiently large.
Combining this with~\eqref{4d1} we deduce that the product $F=fg$ satisfies
\begin{equation}\label{5d}
\log |F(z)|\geq |z|^{\rho(|z|)} \varepsilon_1(|z|)
\quad\text{for }
z\in U_j\cap \ann(r)\setminus \bigcup_{k=1}^l D(a_k,s_k),
\end{equation}
Here
\begin{equation}\label{5d1}
l\leq M(4er,g)=O\!\left(r^{\rho(r)}\delta(r)\right)
\end{equation}
by~\eqref{5b}.

To estimate the logarithmic derivative we note that 
for $s>|z|$ we have~\cite[p.~88]{Goldberg2008}
\begin{equation}\label{5e}
\left| \frac{g'(z)}{g(z)}\right|
\leq \frac{4s}{(s-|z|)^2} T(s,g)+
\sum_{|z_j|\leq s} \frac{2}{|z-z_j|},
\end{equation}
where $(z_j)$ is the sequence of zeros of~$g$.
Here we take $s=4r$ so that
\[
\frac{4s}{(s-|z|)^2}\leq \frac{16r}{(4r-2r)^2}=\frac{4}{r}
\quad\text{for }
|z|\leq 2r
\]
and hence
\begin{equation}\label{5f}
\frac{4s}{(s-|z|)^2} T(s,g) \leq \frac{4T(4r,g)}{r}
=O\!\left(r^{\rho(r)-1}\delta(r)\right)
\quad\text{for }
z\in \ann(r).
\end{equation}

To estimate the sum on the right side of~\eqref{5e} we apply Lemma~\ref{fuchs} with 
$H=r\sqrt{\delta(r)}/2$.
This yields disks $D(b_1,t_1),\dots,D(b_m,t_m)$ with 
\begin{equation}\label{5g}
\sum_{k=1}^m t_k^2 \leq r^2\delta(r)
\end{equation}
such that
\begin{equation}\label{5h}
\sum_{k=1}^m\frac{1}{|z-z_k|}\leq  \frac{4m}{r\sqrt{\delta(r)}}
\quad\text{for}\
z\in\C\setminus \bigcup_{k=1}^m D(b_k,t_k).
\end{equation}
Here
\begin{equation}\label{5i}
m\leq n(4r,0)\leq N(4er,0)\leq M(4er,g)
=O\!\left(r^{\rho(r)}\delta(r) \right)
\end{equation}
as in~\eqref{5a} and~\eqref{5b} and hence
\begin{equation}\label{5j}
\sum_{k=1}^m\frac{1}{|z-z_k|}
=O\!\left(r^{{\rho(r)}-1}\sqrt{\delta(r)} \right)
\quad\text{for }
z\in \ann(r)\setminus \bigcup_{k=1}^m D(b_k,t_k).
\end{equation}
Combining~\eqref{5e}, \eqref{5f} and \eqref{5j} and noting that $\sqrt{\delta(r)}=o(\varepsilon_2(r))$ we
conclude that
\begin{equation}\label{5k}
\left| \frac{zg'(z)}{g(z)}\right|
=o (r^{{\rho(r)}-1}\varepsilon_2(r))
\quad\text{for }
z\in \ann(r)\setminus \bigcup_{k=1}^m D(b_k,t_k).
\end{equation}
Noting that
\begin{equation}\label{5l}
 \frac{zF'(z)}{F(z)}
= \frac{zf'(z)}{f(z)}
+ \frac{zg'(z)}{g(z)}
\end{equation}
we deduce from~\eqref{4h} that 
\begin{equation}\label{5m}
\re \frac{zF'(z)}{F(z)} 
\geq \frac12 c_j \rho |z|^{\rho(|z|)}\varepsilon_2(|z|)
\quad\text{for} \ z\in W_j\cap 
 \ann(r)\setminus \bigcup_{k=1}^m D(b_k,t_k),
\end{equation}
provided $r$ is large enough.
Taking again $\beta(r)=\exp\!\left(r^{\rho(r)}\varepsilon_1(r)\right)$ 
we deduce from~\eqref{5d} and~\eqref{5m} that
\begin{equation}\label{5n}
A(F,\beta)\supset  W_j\cap \ann(r)\setminus 
\left( \bigcup_{k=1}^l D(a_k,s_k)\cup \bigcup_{k=1}^m D(b_k,t_k) \right)
\end{equation}
for $j=1,\dots,m$ and large~$r$.

By~\eqref{5m} there exists $C>0$ such that 
\begin{equation}\label{5o}
\frac{32|F(z)|}{|F'(z)|}\leq q_r:=C\frac{r^{1-{\rho(r)}}}{\varepsilon_2(r)}
\quad\text{for} \ z\in W_j\cap 
 \ann(r)\setminus \bigcup_{k=1}^m D(b_k,t_k).
\end{equation}
With $X=\bigcup_{j=1}^m X_j$ as in the proof of Theorem~\ref{thm1.2} we now conclude that
\begin{equation}\label{5p}
B(F,\beta)\supset  X\cap \ann(r)\setminus 
\left( \bigcup_{k=1}^l D(a_k,s_k+q_r)\cup \bigcup_{k=1}^m D(b_k,t_k+q_r) \right).
\end{equation}
Using~\eqref{4d2}, \eqref{5d1} and the definition of $q_r$ in~\eqref{5o} we see that
\begin{equation}\label{5q}
\begin{aligned}
\meas\!\left( \bigcup_{k=1}^l D(a_k,s_k+q_r)\right)
&\leq \pi\sum_{k=1}^l (s_k+q_r)^2
\leq 2\pi \sum_{k=1}^l (s_k^2+q_r^2)
\\ &
\leq 2 \pi\left(\sum_{k=1}^l s_k\right)^2+ 2\pi l q_r^2
\\ &
= O\!\left( r^2 \delta(r)^2 \right) + O\!\left( r^{2-\rho(r)} \frac{\delta(r)}{\varepsilon_2(r)^2}\right)
\\ &
= o\!\left( r^2 \varepsilon_3(r/2) \right) 
\end{aligned}
\end{equation}
for large~$r$.  Analogously, \eqref{5g} and \eqref{5i} yield
\begin{equation}\label{5r}
\meas\!\left( \bigcup_{k=1}^m D(b_k,t_k+q_r)\right)
= o\!\left( r^2 \varepsilon_3(r/2) \right) 
\end{equation}
for large~$r$.
It follows from~\eqref{5q} and~\eqref{5r} that
\begin{equation}\label{5s}
\dens\!\left( \bigcup_{k=1}^l D(a_k,s_k+q_r)\cup \bigcup_{k=1}^m D(b_k,t_k+q_r),\ann(r)\right)
= o\!\left( r^2 \varepsilon_3(r/2) \right) .
\end{equation}
Together with~\eqref{4i} and~\eqref{5p} we now see that
\begin{equation}\label{5t}
\dens(B(F,\beta),\ann(r))
\geq 1-7m\, \varepsilon_3(r/2)
\end{equation}
for large~$r$.
As in the proof of Theorem~\ref{thm1.2} the conclusion now 
follows directly from Theorem~\ref{thm1.4}.
\end{proof}

The theorem of Steinmetz~\cite[Theorem~1, Corollary~1]{Steinmetz1989} 
referred to in the introduction is the following.
\begin{lemma}\label{thm-steinmetz}
Let $w$ be an entire transcendental solution of~\eqref{lde}, with rational coefficients $p_0,\dots,p_{n-1}$.
Then there exist $q\in\N$, $C>0$ and $\theta_1,\dots,\theta_{m+1}\in\R$ with 
$\theta_1<\theta_2<\dots<\theta_m<\theta_{m+1}=\theta_1+2\pi$
such that the number $n(r)$ of zeros of $w$ of modulus at most $r$ which are contained 
in the union of the domains
\[
S_j=\left\{re^{i\theta}\colon r> 1\text{ and }
\theta_j+Cr^{-1/q} \log r<\theta<\theta_{j+1}-Cr^{-1/q}\log r \right\},
\quad  
\]
satisfies $n(r)=O(\log r)$.

Let $(z_k)$ be the sequence of zeros contained in the union of the $S_j$.
Then there exist $\varepsilon>0$ and, for $j=1,\dots,m$, a polynomial $P_j$ such that 
\begin{equation}\label{5t1}
\log|w(z)|=\re P_j(z^{1/q}) +O(\log|z|)  
\quad  \text{for}\ z\in S_j\setminus  \bigcup_k D(z_k,|z_k|^{1-\varepsilon}).
\end{equation}
\end{lemma}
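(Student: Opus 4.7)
\emph{Plan.} The strategy is to combine the classical asymptotic theory of linear ODEs at an irregular singular point with Nevanlinna-theoretic zero counting. Since the coefficients $p_0,\dots,p_{n-1}$ are rational, the change of variable $\zeta=1/z$ turns~\eqref{lde} into an equation with an irregular singularity at $\zeta=0$. The Hukuhara--Turrittin theorem then yields an integer $q\in\N$ (the ramification index arising from the Newton polygon of the equation) together with polynomials $Q_1,\dots,Q_s$ in $z^{1/q}$ and exponents $\alpha_1,\dots,\alpha_s\in\C$ such that the formal solution space at infinity admits a basis of the form $z^{\alpha_j}\exp\!\bigl(Q_j(z^{1/q})\bigr)\bigl(1+O(z^{-1/q})\bigr)$.

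Next, one lifts each formal solution to an actual holomorphic solution $w_j$ on an appropriate sector near infinity, asymptotic to its formal counterpart in the Gevrey sense. The given entire solution $w$ is a linear combination of the $w_j$ on each sector; these combinations may change across Stokes rays (the Stokes phenomenon), but on any region avoiding thin logarithmic-width neighbourhoods of the critical rays $\arg z=\theta_j$ precisely one exponential $\exp(Q_j(z^{1/q}))$ is strictly dominant. Consequently $w$ is asymptotic to a constant multiple of the dominating $w_j$ there, so taking logarithms and setting $P_j=Q_j$ gives
\[
\log|w(z)|=\re P_j(z^{1/q})+\alpha_j\log|z|+O(1),
\]
which is~\eqref{5t1} after absorbing the power factor $z^{\alpha_j}$ into the $O(\log|z|)$ error.

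For the zero count, observe that zeros can only accumulate where two dominant exponentials become comparable, i.e.\ where $\re Q_j(z^{1/q})\approx \re Q_{j'}(z^{1/q})$; by elementary calculus this occurs along finitely many rays $\arg z=\theta_j$. Inside each thin Stokes sector $\{\theta_j-Cr^{-1/q}\log r<\arg z<\theta_j+Cr^{-1/q}\log r\}$, $w$ is essentially a sum of two exponentials of comparable magnitude, and a Jensen-type estimate using $T(r,w)=O(r^\rho)$ yields that only $O(\log r)$ zeros lie outside these sectors, as required. The excluded disks $D(z_k,|z_k|^{1-\varepsilon})$ in~\eqref{5t1} appear because $\log|w|$ has logarithmic singularities at the zeros; a Cartan-type lemma in the spirit of Lemma~\ref{levin-thm11} shows the pointwise asymptotic holds once these disks are removed.

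The principal obstacle is controlling the Stokes phenomenon sharply enough to obtain the error term $O(\log|z|)$, which is considerably finer than the $o(r^{\rho(r)})$ provided by completely regular growth alone. Steinmetz's original argument avoids the full Hukuhara--Turrittin machinery by working directly within Nevanlinna theory: he uses a sharpened form of the lemma on the logarithmic derivative, applied repeatedly to the ODE, to identify both the polynomials $P_j$ and the ramification index $q$, and he bounds the zero counting function away from the Stokes rays via explicit Wronskian estimates. Reproducing this carefully is technically delicate but introduces no fundamentally new ideas beyond the standard Nevanlinna toolkit.
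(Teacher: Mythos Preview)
The paper does not prove this lemma: it is quoted as a theorem of Steinmetz, with attribution to~\cite[Theorem~1, Corollary~1]{Steinmetz1989}, and used as a black box in the proof of Theorem~\ref{thm1.1}. There is therefore no argument in the paper to compare yours against.

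As an outline your plan is reasonable, and you are correct that Steinmetz's own approach stays within Nevanlinna theory (logarithmic-derivative estimates and Wronskian identities) rather than invoking the full Hukuhara--Turrittin formalism. One step that does not work as written is the zero count: the sentence ``a Jensen-type estimate using $T(r,w)=O(r^\rho)$ yields that only $O(\log r)$ zeros lie outside these sectors'' is too crude, since Jensen applied to $w$ itself gives only $N(r,0)\leq T(r,w)+O(1)=O(r^\rho)$. The bound $n(r)=O(\log r)$ has to come from the dominance structure on $S_j$ itself---once a single term $c_j z^{\alpha_j}e^{Q_j(z^{1/q})}(1+o(1))$ with $c_j\neq 0$ controls $w$ on $S_j$, one obtains $w(z)\neq 0$ for large $z\in S_j$ directly, and any remaining zeros are governed by the subleading structure of the asymptotic expansion, not by the global characteristic $T(r,w)$. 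This is the kind of delicacy you allude to in your final paragraph, but the Jensen shortcut as stated does not deliver it.
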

The rays $\{z\colon \arg z=\theta_j\}$ are called \emph{Stokes rays} in the theory
of differential equations.
\begin{proof}[Proof of Theorem~\ref{thm1.1}]
Let $S_1,\dots,S_m$ and $(z_k)$ be as in Lemma~\ref{thm-steinmetz}.
Let $g$ be the canonical product formed with the~$z_k$; that is,
\begin{equation}\label{5u}
g(z)=\prod_{k=1}^\infty \left(1-\frac{z}{z_k}\right).
\end{equation}
Noting that $|z_k|\geq 1$ for all $k$ by the definition of the $S_j$ we deduce from
classical estimates~\cite[Section 2.3]{Goldberg2008} of canonical products that
\begin{equation}\label{5v}
\log M(r,g)\leq \int_1^r\frac{n(t)}{t}dt + r \int_r^\infty \frac{n(t)}{t^2}dt
=O\!\left((\log r)^2\right).
\end{equation}
Moreover, if $|z-z_k|\geq |z_k|^{1-\varepsilon}$ for all~$k$, then
\begin{equation}\label{5w}
|g(z)|= \prod_{|z_k|\leq 2|z|} \frac{|z_k-z|}{|z_k|} 
\cdot \prod_{|z_k|> 2|z|} \left| 1-\frac{z}{z_k}\right| 
\geq \prod_{|z_k|\leq 2|z|}  |z_k|^{-\varepsilon}
\cdot \prod_{|z_k|> 2|z|} \left( 1-\frac{|z|}{|z_k|}\right)
\end{equation}
and hence 
\begin{equation}\label{5x}
\log |g(z)|\geq -\varepsilon \sum_{|z_k|\leq 2|z|} \log|z_k|
- 2 |z| \sum_{|z_k|> 2|z|} \frac{1}{|z_k|}.
\end{equation}
Now 
\begin{equation}\label{5y}
\sum_{|z_k|\leq 2|z|} \log|z_k| \leq n(2|z|)\log (2|z|)=O\!\left((\log |z|)^2\right)
\end{equation}
and
\begin{equation}\label{5z}
|z| \sum_{|z_k|> 2|z|} \frac{1}{|z_k|}
=|z|\int_{2|z|}^\infty \frac{1}{t}dn(t)=|z|\int_{2|z|}^\infty \frac{n(t)}{t^2} dt
=O\!\left((\log |z|)^2\right)
\end{equation}
so that altogether we obtain 
\begin{equation}\label{6a}
|\log |g(z)||= O\!\left((\log |z|)^2\right)
\quad  
\text{for}\ z\notin \bigcup_k D(z_k,|z_k|^{1-\varepsilon}).
\end{equation}
Let $f=w/g$. It then follows from~\eqref{5t1} and~\eqref{6a} that
\begin{equation}\label{6b}
\log|f(z)|=\re P_j(z^{1/q}) +O\!\left((\log|z|)^2\right)
\quad  \text{for}\ z\in S_j\setminus  \bigcup_k D(z_k,|z_k|^{1-\varepsilon}).
\end{equation}
Some of the disks $D(z_k,|z_k|^{1-\varepsilon})$ may overlap. However, since
\begin{equation}\label{6c}
\sum_{|z_k|\leq 2|z|}|z_k|^{1-\varepsilon}\leq n(2|z|) (2|z|)^{1-\varepsilon}
=O\!\left(|z|^{1-\varepsilon}\log|z|\right),
\end{equation}
the components of $\bigcup_k D(z_k,|z_k|^{1-\varepsilon})$ that intersect the disk
$D(0,r)$ have diameter $O\!\left(r^{1-\varepsilon}\log r\right)$. 
Assuming $\varepsilon<1/q$ we deduce that there exists $C'>0$ such that
for sufficiently large $r_0$  every component of $\bigcup_k D(z_k,|z_k|^{1-\varepsilon})$
that intersects
\begin{equation}\label{6d}
T_j=\left\{re^{i\theta}\colon r\geq r_0\text{ and } \theta_j+C'r^{-\varepsilon}\log r\leq\theta\leq 
\theta_{j+1}-C'r^{-\varepsilon}\log r\right\}
\end{equation}
is contained in $S_j$.
Since $\log|f(z)|$ and $\re P_j(z^{1/q})$ are harmonic in these components and since
$\log|f(z)|=\re P_j(z^{1/q}) +O\!\left((\log|z|)^2\right)$
on their boundary by~\eqref{6b}, this implies 
that this equation actually also holds inside these components.
We deduce that
\begin{equation}\label{6e}
\log|f(z)|=\re P_j(z^{1/q}) +O\!\left((\log|z|)^2\right)
\quad  \text{for}\ z\in T_j.
\end{equation}
Thus $f$ satisfies the hypothesis of Theorem~\ref{thm1.2}.
Since $w=fg$ it now follows from Theorem~\ref{thm1.3}
that $I(w)\cap J(w)$ has positive measure.
\end{proof}

\section{Proof of Theorem~\ref{thm1.5}}
We may assume that $c=1$ and that $f$ is given as a
Weierstra{\ss} product
\begin{equation}\label{9a}
f(z)=\prod_{k=1}^\infty E\!\left(\frac{z}{a_k}, p\right)
\end{equation}
with $|a_k|>\exp^L(1)$  and $|\arg a_k|\leq \varepsilon(|a_k|)$ for all $k\in \N$.
We put $\varepsilon(r)=1$ for $r<\exp^L(1)$.
We consider the auxiliary function
\begin{equation}\label{9b}
f^*(z)=\prod_{k=1}^\infty E\!\left(\frac{z}{|a_k|}, p\right).
\end{equation}
Our arguments are similar to those in \cite[Chapter~2, Sections~2 and~5]{Goldberg2008}.
We have
\begin{equation}\label{9c}
\log f^*(z)= -z^{p+1} \int_0^\infty \frac{n(t, 0)}{t^{p+1} (t-z)}dt
\end{equation}
and
\begin{equation}\label{9d}
I(z):=\int_0^\infty \frac{t^{\rho(r)} dt}{t^{p+1}(t-z)}
=-\frac{\pi e^{-i \pi (\rho(r)-p)}}{\sin \pi (\rho(r)-p)} z^{\rho(r)-p-1},
\end{equation}
with logarithms and powers defined for $0<\arg z<2\pi$.
For $z=re^{i\theta}$ thus
\begin{equation} \label{9e}
\re\!\left( z^{p+1} I(z)\right)=-\frac{\pi\cos((\theta-\pi)\rho(r))}{\sin \pi \rho(r)} r^{\rho(r)}.
\end{equation}
By~\eqref{9c} and \eqref{9d} 
we have
\begin{equation} \label{9f}
\begin{aligned}
\left|\log |f^*(z)| + \re (z^{p+1} I(z))\right|
 =
\left| \re \left( z^{p+1} \int_0^\infty \frac{t^{\rho(r)}-n(t, 0)}{t^{p+1} (t-z)}dt\right)\right|.
\end{aligned}
\end{equation}
We define 
\[
b(r)=\frac{1}{a(r)}=\exp\!\left(\varepsilon(r)^{-1/4}\right).
\]
As in the proof of Theorem 2.2 in \cite[Chapter 2, Section 2]{Goldberg2008} we see
that for every $k\in [a(r), b(r)]$ there exists $\kappa\in [a(r), b(r)]$ such that 
\begin{equation}\label{rho(kr)}
  \begin{aligned}
    |\rho(kr)-\rho(r)|= & \kappa r|\rho'(\kappa r)||\log k| \\
    \le  & \varepsilon (\kappa r) \frac{\log b(r)}{\log (a(r) r)}=(1+o(1)) \frac{\varepsilon(r)^{3/4}}{\log r}
 \quad\text{as}\  r\to\infty.
      \end{aligned}
\end{equation}
Thus for all such $k$ we have 
\begin{equation}\label{e:prox_or_asym}
r^{\rho(kr)-\rho(r)}=\exp\!\left((1+o(1))\varepsilon(r)^{3/4}\right) \to 1
 \quad\text{as}\ r\to \infty.
\end{equation}
We choose $\delta>0$  such that $\min\{ \rho(t)-p,  p+1-\rho(t)\}>\delta$ for large $t$.

In order to estimate  the integral in~\eqref{9f} we follow the arguments from \cite[Chapter 2, Section 5]{Goldberg2008} 
and split the integral as follows:
\begin{equation} \label{9fest}
\begin{aligned}
  \int_0^\infty \frac{n(t, 0)-t^{\rho(r)}}{t^{p+1} (t-z)}dt
= & \int_0^{a(r)r} \frac{n(t, 0)}{t^{p+1} (t-z)}dt - \int_0^{a(r)r} \frac{t^{\rho(r)}}{t^{p+1} (t-z)}dt \\ 
 & +\int_{a(r)r}^{b(r)r} \frac{n(t, 0)-t^{\rho(t)}}{t^{p+1} (t-z)}dt+ \int_{a(r)r}^{b(r)r} \frac{t^{\rho(t)}-t^{\rho(r)}}{t^{p+1} (t-z)}dt
\\  & +   \int_{b(r)r}^\infty \frac{n(t, 0)}{t^{p+1} (t-z)}dt -\int_{b(r)r}^\infty \frac{t^{\rho(r)}}{t^{p+1} (t-z)}dt  .
\end{aligned}
\end{equation}
For the first two integrals on the right-hand side of \eqref{9fest} we find, using  standard properties of the 
proximate order and \eqref{e:prox_or_asym}, that there exists a constant $C$ such that
\begin{equation}\label{e:1-2}
\begin{aligned}
\left|  \int_0^{a(r)r} \frac{n(t, 0)}{t^{p+1} (t-z)}dt - \int_0^{a(r)r} \frac{t^{\rho(r)}}{t^{p+1} (t-z)}dt  \right|
& \le \frac{C}{r}  \int_1^{a(r)r} \frac{t^{\rho(t)} + t^{\rho(r)}}{t^{p+1} }dt\\
& \leq Ca(r)^\delta r^{\rho(r)-p-1} 
\\ & 
= o\!\left(\varepsilon(r) r^{\rho(r)-p-1}\right)
\end{aligned}
\end{equation}
as $r\to\infty$.
Similarly,
\begin{equation}\label{e:5-6}
\begin{aligned}
\left|  \int_{b(r)r}^\infty \frac{n(t, 0)}{t^{p+1} (t-z)}dt -\int_{b(r)r}^\infty \frac{t^{\rho(r)}}{t^{p+1} (t-z)}dt  \right| 
& \le 
C  \int_{b(r)r}^\infty  \frac{t^{\rho(t)} + t^{\rho(r)}}{t^{p+2} }dt\\
& \le \frac{C r^{\rho(r)-p-1}}{b(r)^\delta} 
\\ & 
= o({{\varepsilon(r)}}  r^{\rho(r)-p-1}).
\end{aligned}
\end{equation}
The hypothesis \eqref{e:asymp_ray} and  \eqref{e:prox_or_asym} allow us to estimate 
the third and the forth integral from \eqref{9fest} as follows:
\begin{equation} \label{e:3-4a}
\begin{aligned}
& \quad 
\left| \int_{a(r)r}^{b(r)r} \frac{n(t, 0)-t^{\rho(t)}}{t^{p+1} (t-z)}dt
+ \int_{a(r)r}^{b(r)r} \frac{t^{\rho(t)}-t^{\rho(r)}}{t^{p+1} (t-z)}dt\right|
\\ &
\le  \frac{C}{\sin \frac{\sqrt{\varepsilon(r)}}{2}} 
 \int_{a(r)r}^{b(r)r} \frac{t^{\rho(t)}\varepsilon(t)^{3/4}}{t^{p+1}(t+r) }dt
\\ &
\le (2C+o(1))\sqrt[4]{\varepsilon(r)} \int_{a(r)r}^{b(r)r} \frac{t^{\rho(r)}}{t^{p+1}(t+r) }dt.
\end{aligned}
\end{equation}
Since 
\begin{equation} \label{e:3-4b}
\begin{aligned}
\int_{a(r)r}^{b(r)r} \frac{t^{\rho(r)}}{t^{p+1}(t+r) }dt
& = r^{\rho(r)-p-1} \int_{a(r)}^{b(r)} \frac{\tau ^{\rho( r)}}{\tau^{p+1}(1+\tau) }d\tau 
\\ &
\leq r^{\rho(r)-p-1} \int_{0}^{\infty} \frac{\tau ^{\rho( r)}}{\tau^{p+1}(1+\tau) }d\tau 
\end{aligned}
\end{equation}
this yields
\begin{equation} \label{e:3-4}
\left| \int_{a(r)r}^{b(r)r} \frac{n(t, 0)-t^{\rho(t)}}{t^{p+1} (t-z)}dt
+ \int_{a(r)r}^{b(r)r} \frac{t^{\rho(t)}-t^{\rho(r)}}{t^{p+1} (t-z)}dt\right|
=O\left({\sqrt[4]{\varepsilon(r)}}  r^{\rho(r)-p-1}\right).
\end{equation}
Combining \eqref{9fest}, \eqref{e:1-2}, \eqref{e:5-6} and \eqref{e:3-4} with~\eqref{9f} we find for $z=re^{i\theta}$ with
$\theta$ satisfying~\eqref{e:0b} that
\begin{equation} \label{9l}
\left|\log |f^*(z)| + \re (z^{p+1} I(z))\right|
=O\!\left(\sqrt[4]{\varepsilon(r)}r^{\rho(r)}\right).
\end{equation}

Next we note that
\begin{equation} \label{9m}
\begin{aligned}
\log \left|E\!\left(\frac{z}{|a_k|} ,p\right)\right|
-\log \left|E\!\left(\frac{z}{a_k} ,p\right)\right|
&=
\re\left(\log E\!\left(\frac{z}{|a_k|} ,p\right)
-\log E\!\left(\frac{z}{a_k} ,p\right)\right)
\\ &
=
\re \int _{a_k}^{|a_k|} \frac{d}{du} \log E\!\left(\frac{z}{u} ,p\right) du
\\ &
=
\re \int _{a_k}^{|a_k|}
\frac{z^{p+1}}{u^{p+1} (u-z)} du
\end{aligned}
\end{equation}
and thus, integrating over the appropriate part of the circle with radius $|a_k|$,
and noting that $|u-z|\geq ||a_k|-z|/2$ for $u$ on that part of the circle
and $z=re^{i\theta}$ with $\theta$ satisfying~\eqref{e:0b},
we see that
\begin{equation} \label{9n}
\begin{aligned}
\left|\log \left|E\!\left(\frac{z}{|a_k|} ,p\right)\right|
-\log \left|E\!\left(\frac{z}{a_k} ,p\right)\right|\right|
&\leq
\int _{a_k}^{|a_k|} \frac{r^{p+1}}{|a_k|^{p+1} |u-z|} |du|
\\ &
\leq 2 |a_k|\!\cdot\! |\arg a_k| \frac{r^{p+1}}{|a_k|^{p+1} \left||a_k|-z\right|}
\\ &
\leq  2 \frac{\varepsilon(|a_k|) r^{p+1}}{|a_k|^{p} \left||a_k|-z\right|},
\end{aligned}
\end{equation}
provided $r$ is sufficiently large.
It follows that
\begin{equation} \label{9p}
\begin{aligned}
\left|\log |f^*(z)| - \log |f(z)| \right|
&\leq
\sum_{k=1}^\infty
\left|\log \left|E\!\left(\frac{z}{|a_k|} ,p\right)\right|
-\log \left|E\!\left(\frac{z}{a_k} ,p\right)\right|\right|
\\ &
\leq 2r^{p+1} \int_0^\infty   \frac{\varepsilon(t)}{t^{p} \left|t-z\right|} dn(t,0).
\end{aligned}
\end{equation}
Again we split the integral into three parts:
Noting that
$\varepsilon'(t)=o(\varepsilon(t)/t)$
we obtain
\begin{equation} \label{9r}
\begin{aligned}
r^{p+1} \int_0^{r/2}   \frac{\varepsilon(t)}{t^{p} \left|t-z\right|} dn(t,0)
&
\leq
2r^p \int_0^{r/2} \frac{\varepsilon(t)}{t^{p}}dn(t,0)
\\ &
=2^{p+1}\varepsilon(r/2) n(r/2,0)
\\ & \qquad
+(2p+o(1)) r^p \int_0^{r/2} \frac{\varepsilon(t)n(t,0)}{t^{p+1}}dt
\\ &
\leq O\!\left(\varepsilon(r)r^{\rho(r)}\right) +(2p+o(1)) r^p \int_0^{r/2}  t^{\rho_1(t)-p-1}dt
\\ &
=O\!\left(r^{\rho_1(r)}\right)
\end{aligned}
\end{equation}
where 
\begin{equation*} \label{9g}
\rho_1(t)=\rho(t)+\frac{\log\varepsilon(t)}{\log t}=\rho(t)-\frac{\log^{L+1} t}{\log t}.
\end{equation*}
Similarly,
\begin{equation} \label{9s}
\begin{aligned}
r^{p+1} \int_{2r}^{\infty}   \frac{\varepsilon(t)}{t^{p} \left|t-z\right|} dn(t,0)
& \leq
2 r^{p+1}\int_{2r}^{\infty}   \frac{\varepsilon(t)}{t^{p+1}} dn(t,0)
\\ &
\leq 2(p+1+o(1))r^{p+1} \int_{2r}^{\infty} \frac{\varepsilon(t)n(t,0)}{t^{p+2}} dt
\\ &
= 2(p+1+o(1))r^{p+1} \int_{2r}^{\infty} t^{\rho_1(t)-p-2}dt
\\ &
=O\!\left(r^{\rho_1(r)}\right).
\end{aligned}
\end{equation}
We also have
\begin{equation} \label{9t}
r^{p+1} \int_{r/2}^{2r}   \frac{\varepsilon(t)}{t^{p} \left|t-z\right|} dn(t,0)
\leq \frac{2^p\varepsilon(r/2)}{r^{p}}\frac{\pi}{2r\sqrt{\varepsilon(r)}} n(2r,0)
=O\!\left(r^{\rho_2(r)}\right)
\end{equation}
for $z=re^{i\theta}$ with $\theta$ satisfying~\eqref{e:0b}, where
\begin{equation*} \label{9k}
\rho_2(t)=\rho_1(t)+\frac{\log\sqrt{\varepsilon(t)}}{\log t}=\rho(t)-\frac12 \frac{\log^{L+1} t}{\log t}, 
\end{equation*}
provided $r$ is sufficiently large.
Combining the last three estimates with~\eqref{9p} we find that
\begin{equation} \label{9u}
\left|\log |f^*(z)| - \log |f(z)| \right|
=O\!\left(r^{\rho_2(r)}\right)
=O\!\left(\sqrt[4]{\varepsilon(r)}r^{\rho(r)}\right)
\end{equation}
for such~$z$.
Together with~\eqref{9e} and~\eqref{9l} this yields the conclusion.

\section{Further results}\label{further}
It is not clear to which extent the condition in Theorems~\ref{thm1.2} and~\ref{thm1.3}
that all or most zeros are distributed along finitely many rays can be omitted.
Suppose that $f$ has completely regular growth; that is, 
\eqref{C0} and~\eqref{def-crg} are satisfied.
The results  by Gol'dberg and Korenkov~\cite{Goldberg1978,Goldberg1981}
as well as Gol'dberg, Sodin and Strochik~\cite{Goldberg1992} that were mentioned
in the introduction say in particular that 
for $1<\mu\leq 2$ there exist disks $D(b_k,t_k)$ satisfying
\begin{equation}\label{7a}
\sum_{|b_k|\leq r}t_k^\mu =o(r^\mu) 
\end{equation}
such that
\begin{equation}\label{7a1}
\re\!\left(\frac{zf'(z)}{f(z)}\right) =\rho h(\theta) r^{\rho(r)} +o(r^{\rho(r)})
\quad\text{for}\ 
z=re^{i\theta}\notin \bigcup_k D(b_k,t_k).
\end{equation}
In particular, choosing $\mu=2$ we can achieve that
\begin{equation}\label{7b}
\sum_{|b_k|\leq r}t_k^2 =o(r^2) .
\end{equation}
Also, it follows from~\eqref{C0} that the exceptional disks $D(a_k,s_k)$ occurring in~\eqref{def-crg}
satisfy 
\begin{equation}\label{7c}
\sum_{|a_k|\leq r}s_k^2 =o(r^2) .
\end{equation}
However, in order to apply Theorem~\ref{thm1.4} we need a quantitative estimate instead of the term
$o(r^2)$ in~\eqref{7b} and~\eqref{7c}.

For the following two results we also note that the disks $D(a_k,s_k)$ and $D(b_k,t_k)$ contain zeros of $f$
and thus 
\begin{equation}\label{7d}
n(r,(a_k)):=\sum_{|a_k|\leq r} 1 =O(r^{\rho(r)})
\quad  \text{and}\quad 
n(r,(b_k))=O(r^{\rho(r)}).
\end{equation}

\begin{theorem}\label{thm4.1}
Let $f$ be an entire function of completely regular growth with respect
to the proximate order $\rho(r)$ and suppose that
the indicator is strictly positive; that is, we have~\eqref{def-crg} for some function $h$
satisfying $h(\theta)>0$ for all $\theta\in\R$.
Let $\varepsilon(r)=1/\log^N r$ for some $N\in\N$ 
and suppose that the radii $s_k$ and $t_k$ of the exceptional disks in~\eqref{def-crg} 
and~\eqref{7a1} satisfy
\begin{equation}\label{7e}
\sum_{|a_k|\leq r}s_k^2 =O\!\left(r^2\varepsilon(r)\right) 
\quad  \text{and}\quad 
\sum_{|b_k|\leq r}t_k^2 =O\!\left(r^2\varepsilon(r)\right).
\end{equation}
Then $I(f)\cap J(f)$ has positive measure.
\end{theorem}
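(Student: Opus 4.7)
The plan is to reduce to Theorem~\ref{thm1.4}, following the template of the proofs of Theorems~\ref{thm1.2} and~\ref{thm1.3}, but using the uniform positivity of the indicator in place of positivity on individual sectors. Since $h$ is continuous and $2\pi$-periodic, the hypothesis $h>0$ gives $h_0:=\min_\theta h(\theta)>0$. Then~\eqref{def-crg} yields
\[
\log|f(z)|\geq \tfrac{1}{2}h_0\,|z|^{\rho(|z|)}
\]
for large $z$ outside $\bigcup_k D(a_k,s_k)$, while~\eqref{7a1} yields
\[
\re\!\left(\frac{zf'(z)}{f(z)}\right)\geq \tfrac{1}{2}\rho h_0\,|z|^{\rho(|z|)}\geq 64
\]
for large $z$ outside $\bigcup_k D(b_k,t_k)$. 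With $\beta(r)=\exp(h_0 r^{\rho(r)}/3)$, a continuous increasing function eventually exceeding $r$, these two estimates place such $z$ in $A(f,\beta)$.

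To promote membership from $A(f,\beta)$ to $B(f,\beta)$, the disk $D(z,32|f(z)/f'(z)|)$ must be disjoint from $\bigcup_k D(b_k,t_k)$. The lower bound on $\re(zf'/f)$ gives $|f(z)/f'(z)|\leq C|z|^{1-\rho(|z|)}$, so with $q_r=32Cr^{1-\rho(r)}$ and
\[
E(r)=\bigcup_k D(a_k,s_k+q_r)\cup\bigcup_k D(b_k,t_k+q_r),
\]
I would have $\ann(r)\setminus E(r)\subset B(f,\beta)$ for large~$r$.

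The crucial computation is then the area bound for $E(r)\cap\ann(r)$. Using $(s+q)^2\leq 2(s^2+q^2)$ together with~\eqref{7e} and~\eqref{7d},
\[
\sum_{|a_k|\leq 2r}(s_k+q_r)^2
\leq 2\!\!\!\sum_{|a_k|\leq 2r}s_k^2+2\,n(2r,(a_k))\,q_r^2
=O\!\left(r^2\varepsilon(r)\right)+O\!\left(r^{2-\rho(r)}\right),
\]
and analogously for the $t_k$-disks. Since $\rho>0$, the second term is $o(r^2\varepsilon(r))$, so $\dens(B(f,\beta),\ann(r))\geq 1-C'\varepsilon(r)$. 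Taking $\alpha(r)=C'\varepsilon(r)$, the iterates $\beta^n(r_0)$ form an $n$-fold iterated exponential tower, so $\log^N \beta^n(r_0)$ is eventually of order $\beta^{n-N}(r_0)^{\rho}$ and $\sum_n\alpha(\beta^n(r_0))$ converges super-rapidly. Theorem~\ref{thm1.4}, combined with the observation recorded after Lemma~\ref{lemma-zheng} that completely regular growth excludes multiply-connected Fatou components, then yields $\meas(I(f)\cap J(f))>0$.

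The main obstacle I anticipate is controlling the enlarged exceptional set through the cross term $n(2r,(a_k))\,q_r^2=O(r^{\rho(r)}\cdot r^{2-2\rho(r)})=O(r^{2-\rho(r)})$. The positivity of the order $\rho$ is precisely what absorbs this term into $O(r^2\varepsilon(r))$; the argument balances the $L^2$-bound~\eqref{7e} on the radii against the counting bound~\eqref{7d} on the number of disks, and would collapse if $\rho$ were allowed to be zero. A secondary point to verify is that the indicator $h$ is genuinely continuous here, which follows from trigonometric convexity of indicators of functions of finite positive order.
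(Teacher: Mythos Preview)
Your proposal is correct and follows essentially the same approach as the paper's proof: both exploit the uniform positivity of $h$ to get $\beta(r)=\exp(c\,r^{\rho(r)})$, bound $|f/f'|$ by a multiple of $r^{1-\rho(r)}$, enlarge the exceptional disks by $q_r\asymp r^{1-\rho(r)}$, and estimate the resulting area via $(s+q)^2\leq 2(s^2+q^2)$ together with~\eqref{7e} and~\eqref{7d}, absorbing the cross term $O(r^{2-\rho(r)})$ into $O(r^2\varepsilon(r))$ since $\rho>0$. The only cosmetic difference is that the paper sums over $|a_k|\leq 3r$ rather than $2r$ (to safely capture all enlarged disks meeting $\ann(r)$), which you may wish to adjust.
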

\begin{theorem}\label{thm4.2}
Let $f$ be an entire function of completely regular growth with respect
to the proximate order $\rho(r)$ and suppose that
the indicator is positive except for isolated points.
Let $\varepsilon(r)=1/\log^N r$ for some $N\in\N$ 
and suppose that there exist disks $D(a_k,s_k)$ and $D(b_k,t_k)$ with
\begin{equation}\label{7f}
\log|f(re^{i\theta})|=h(\theta)r^{\rho(r)} +O\!\left(r^{\rho(r)}\varepsilon(r)\right)
\quad\text{for}\ 
z=re^{i\theta}\notin \bigcup_k D(a_k,s_k)
\end{equation}
and
\begin{equation}\label{7g}
\re\!\left(\frac{zf'(z)}{f(z)}\right) =\rho h(\theta) r^{\rho(r)} + O\!\left(r^{\rho(r)}\varepsilon(r)\right)
\quad\text{for}\ 
z=re^{i\theta}\notin \bigcup_k D(b_k,t_k),
\end{equation}
where the radii $s_k$ and $t_k$ of the exceptional disks in~\eqref{7f} 
and~\eqref{7g} satisfy~\eqref{7e}.
Then $I(f)\cap J(f)$ has positive measure.
\end{theorem}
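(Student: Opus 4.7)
The approach is to apply Theorem~\ref{thm1.4} with the choice $\beta(r)=\exp(r^{\rho(r)}\varepsilon(r))$, exactly as in the proofs of Theorems~\ref{thm1.2} and~\ref{thm1.3}. The substantial work in those proofs --- namely, deducing the lower bound for $\re(zf'(z)/f(z))$ from the asymptotics of $\log|f|$ via the Schwarz integral formula --- is now unnecessary, since the required estimate \eqref{7g} is given as a direct hypothesis. What remains is to carve out, inside each annulus $\ann(r)$, a subset of $B(f,\beta)$ whose complement has density $O(\varepsilon(r))$.

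Let $\theta_1<\theta_2<\dots<\theta_m$ denote the isolated zeros of $h$ in $[0,2\pi)$, with $\theta_{m+1}=\theta_1+2\pi$. Since $h$ is $\rho$-trigonometrically convex, continuous, and positive on each open arc $(\theta_j,\theta_{j+1})$, standard arguments yield a uniform constant $c>0$ with $h(\theta)\ge c\min_{j}|\theta-\theta_j|$. Fixing a sufficiently large constant $K$ and setting
\[
W(r)=\{re^{i\theta}\colon \min_{j}|\theta-\theta_j|\ge K\varepsilon(r)\},
\]
we have $h(\theta)\ge cK\varepsilon(r)$ on $W(r)$. The hypotheses \eqref{7f} and \eqref{7g} then yield, for $z=re^{i\theta}\in W(r)$ lying outside the relevant exceptional disks and all large $r$,
\[
\log|f(z)|\ge \tfrac12 cK\, r^{\rho(r)}\varepsilon(r)\quad\text{and}\quad \re\frac{zf'(z)}{f(z)}\ge \tfrac12 cK\rho\, r^{\rho(r)}\varepsilon(r)\ge 64,
\]
provided $K$ has been fixed large enough. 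This places such a $z$ into $A(f,\beta)$.

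To pass from $A(f,\beta)$ to $B(f,\beta)$ one must control $\re(\zeta f'(\zeta)/f(\zeta))$ throughout $D(z,32|f(z)/f'(z)|)$. The lower bound on $\re(zf'/f)$ gives $|f(z)/f'(z)|\le C r^{1-\rho(r)}/\varepsilon(r)$, and we set $q_r=32Cr^{1-\rho(r)}/\varepsilon(r)$. Since $q_r/r\to 0$ much faster than $\varepsilon(r)$, the angular constraint defining $W(\cdot)$ is automatically preserved across such a disk. It therefore suffices to enlarge each $D(a_k,s_k)$ and $D(b_k,t_k)$ by $q_r$. Using \eqref{7d} and \eqref{7e},
\[
\sum_{|a_k|\le 4r}(s_k+q_r)^2\le 2\sum s_k^2+2l q_r^2=O(r^2\varepsilon(r))+O\!\left(r^{2-\rho(r)}\varepsilon(r)^{-2}\right)=O(r^2\varepsilon(r)),
\]
the second term being absorbed because $r^{-\rho(r)}\varepsilon(r)^{-3}\to 0$ for any $\rho>0$; the $b_k$-disks are handled identically. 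Combining with the angular exclusion, which contributes area $O(mK\varepsilon(r)r^2)$ in $\ann(r)$, we obtain $\dens(B(f,\beta),\ann(r))\ge 1-\alpha(r)$ with $\alpha(r)=O(1/\log^N r)$. The summability \eqref{1.4a} is immediate because $\beta^n(r_0)$ grows super-exponentially, so Theorem~\ref{thm1.4}, together with Lemma~\ref{lemma-zheng} (whose hypothesis is fulfilled by completely regular growth), delivers that $I(f)\cap J(f)$ has positive measure.

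The main obstacle is the bookkeeping for the enlarged exceptional disks: the term $lq_r^2$ must not overwhelm $\sum s_k^2$, which rests crucially on the disk-count bound \eqref{7d} combined with $\rho>0$. A secondary point I would need to treat carefully is the uniform linear lower bound $h(\theta)\ge c\min_j|\theta-\theta_j|$, which I would extract from the trigonometric convexity of $h$ together with the isolation of its zeros.
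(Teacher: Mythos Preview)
Your approach is essentially the same as the paper's, which also only sketches the argument: excise the exceptional disks and a thin angular neighbourhood of the zeros $\theta_j$ of $h$, enlarge the disks by $q_r$, estimate the measure of what remains via~\eqref{7d}--\eqref{7e}, and apply Theorem~\ref{thm1.4} together with Lemma~\ref{lemma-zheng}.

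One caveat: your assertion that $\rho$-trigonometric convexity yields the \emph{linear} lower bound $h(\theta)\ge c\min_j|\theta-\theta_j|$ is not correct in general. For instance, $h(\theta)=1-\cos\theta$ is a legitimate indicator for $\rho=1$ (since $h''+h=1\ge 0$), has an isolated zero at $\theta=0$, and vanishes quadratically there. The paper's sketch implicitly makes the same assumption when it takes the angular exclusion of width~$\varepsilon(r)$, so this is not a divergence from the paper but a point both arguments gloss over. The fix is harmless: since $h$ is continuous and strictly positive on compact subarcs of each $(\theta_j,\theta_{j+1})$, choose the exclusion width $\eta(r)\to 0$ so that $\min\{h(\theta):|\theta-\theta_j|\ge\eta(r)\text{ for all }j\}\ge 2C\varepsilon(r)$; then $\log|f|$ and $\re(zf'/f)$ are still bounded below by $c\,r^{\rho(r)}\varepsilon(r)$ in the retained region, the angular exclusion contributes density $O(\eta(r))$, and the super-exponential growth of $\beta^n(r_0)$ guarantees~\eqref{1.4a} for any $\alpha(r)\to 0$.
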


\begin{proof}[Proof of Theorem~\ref{thm4.1}]
Let $0<c<\min_{\theta\in[0,2\pi]}h(\theta)$ and put $\beta(r)=\exp(cr^{\rho(r)})$.
Then
\begin{equation}\label{7h}
A(f,\beta) \supset  \{z\colon |z|>r_0\} \setminus 
\left( \bigcup_{k} D(a_k,s_k)\cup \bigcup_{k} D(b_k,t_k) \right)
\end{equation}
for large $r_0$ by~\eqref{def-crg} and~\eqref{7a1}.
For $z\notin \bigcup_{k} D(a_k,s_k)\cup \bigcup_{k} D(b_k,t_k)$ we also have 
\begin{equation}\label{7i}
\frac{32|f(z)|}{|f'(z)|} \leq \frac{32|z|}{\displaystyle \re\!\left(\frac{zf'(z)}{f(z)}\right)}
\leq \frac{32}{c \rho} |z|^{1-\rho(|z|)}
\end{equation}
by~\eqref{7a1} if $|z|$ is large.
This implies that there exists $C>0$ such that with $q_r=Cr^{1-\rho(r)}$ we have
\begin{equation}\label{7j}
B(f,\beta) \supset  \ann(r)\setminus 
\left( \bigcup_{|a_k|\leq 3r} D(a_k,s_k+q_r)\cup \bigcup_{|b_k|\leq 3r} D(b_k,t_k+q_r) \right)
\end{equation}
for large~$r$.
As in~\eqref{5q} and~\eqref{5r} we have
\begin{equation}\label{7k}
\begin{aligned}
\meas\!\left( \bigcup_{|a_k|\leq 3r} D(a_k,s_k+q_r) \right)
&
\leq 2\pi \sum_{|a_k|\leq 3r} (s_k^2+q_r^2)
\\ &
\leq 2 \pi\sum_{|a_k|\leq 3r} s_k^2+ 2\pi q_r^2 n(3r,(a_k))
\\ &
=O\!\left(r^2\varepsilon(r)\right)+O\!\left(r^{2-\rho(r)}\right)
\\ &
=O\!\left(r^2\varepsilon(r)\right)
\end{aligned}
\end{equation}
by~\eqref{7d} and~\eqref{7e}.
Analogously,
\begin{equation}\label{7l}
\meas\!\left( \bigcup_{|a_k|\leq 3r} D(a_k,s_k+q_r) \right)
=O\!\left(r^2\varepsilon(r)\right).
\end{equation}
It follows from~\eqref{7j}, \eqref{7k}  and~\eqref{7l} that there exists 
a constant $K>0$ such that
\begin{equation}\label{7m}
\dens(B(f,\beta),\ann(r))
\geq 1-K\varepsilon(r).
\end{equation}
As in the proof of Theorem~\ref{thm1.2} the conclusion now follows from
Theorem~\ref{thm1.4}.
\end{proof}

\begin{proof}[Proof of Theorem~\ref{thm4.2}]
We only sketch the argument, as it is largely analogous to those in the previous proofs.
Assume that $\theta_1<\theta_2<\dots <\theta_m<\theta_{m+1}=\theta_1+2\pi$ and
that $h(\theta)>0$ for $\theta_j<\theta<\theta_{j+1}$ and $j=1,\dots,m$.
In addition to the disks $D(a_k,s_k)$ and $D(b_k,t_k)$, we consider disks $D(c_k,u_k)$ 
which cover the regions
$\{re^{i\theta}\colon  r>1, \theta_j-\varepsilon(r) <\theta < \theta_j+\varepsilon(r)\}$.
These disks may be chosen such that the radii satisfy
\begin{equation}\label{7n}
\sum_{|c_k|\leq r}u_k^2 =O\!\left(r^2\varepsilon(r)\right).
\end{equation}
Similarly as in the previous proofs we now deduce from~\eqref{7f} 
and~\eqref{7g} that, for $z$ outside the disks 
$D(a_k,s_k)$, $D(b_k,t_k)$ and $D(c_k,u_k)$, we have
\begin{equation}\label{7o}
\log |f(z)|\geq c |z|^\rho \varepsilon(|z|)
\quad  \text{and}\quad 
\re\!\left(\frac{zf'(z)}{f(z)}\right) \geq c |z|^\rho \varepsilon(|z|).
\end{equation}
Putting again $\beta(r)=\exp(r^{\rho(r)}\varepsilon(r))$
and $q_r:=Cr^{1-\rho(r)}(\log r)^\varepsilon$ for some large constant $C$
we now find 
with
\begin{equation}\label{7q}
E(r)= \bigcup_{|a_k|\leq 3r} D(a_k,s_k+q_r)\cup \bigcup_{|b_k|\leq 3r} D(b_k,t_k+q_r) 
\cup \bigcup_{|c_k|\leq 3r} D(c_k,u_k+q_r)
\end{equation}
that
\begin{equation}\label{7r}
B(f,\beta) \supset  \ann(r)\setminus E(r)
\end{equation}
and hence
\begin{equation}\label{7s}
\dens(B(f,\beta),\ann(r)) \geq 1-K\varepsilon(r)
\end{equation}
for some constant~$K$, provided $r$ is large.
The conclusion then follows using the same arguments as before.
\end{proof}

\begin{remark}\label{rem1}
Suppose that $f$ has completely regular growth and that there exists $L\in\N$ 
and a $C_0$-set $E$ such that 
\begin{equation}\label{7u}
\log|f(re^{i\theta})|=h(\theta)r^{\rho(r)} +O\!\left(r^{\rho(r)}/\log^L r\right)
\quad\text{for}\ re^{i\theta}\notin E.
\end{equation}
We do not know whether this implies that 
the $C_0$-set can be chosen such that the corresponding radii~$s_k$ satisfy~\eqref{7e}
with $\varepsilon(r)=1/\log^N r$ for some $N\in\N$.
We also do not know whether we then have~\eqref{7g} with disks $D(b_k,t_k)$ satisfying~\eqref{7e}. 
In other words, it seems possible that the hypotheses of Theorem~\ref{thm4.2} are satisfied
if~\eqref{7u} holds with some $C_0$-set $E$ and if
$h$ is positive except for isolated points.
This would be a substantial generalization of Theorem~\ref{thm1.2}.
\end{remark}
\begin{acknowledgement}
We thank Weiwei Cui and the referee for helpful comments.
\end{acknowledgement}

\end{document}